\def\N{\mathbb{N}}
\def\C{\mathbb{C}}
\def\a{\mathbf{a}}
\def\b{\mathbf{b}}
\def\G{\mathcal{G}}
\def\Be{\mathcal{B}}
\def\al{\boldsymbol{\alpha}}
\def\be{\boldsymbol{\beta}}
\def\Z{\mathbb{Z}}
\def\L{\mathcal{L}}
\def\A{\mathbf{A}}
\def\B{\mathbf{B}}
\DeclareMathOperator*{\res}{\mathrm{res}}
\numberwithin{equation}{section}
\newtheorem{Theorem}{Theorem}[section]
\newtheorem{Corollary}[Theorem]{Corollary}
{ \theoremstyle{definition}
\newtheorem{Remark}[Theorem]{Remark} }
\begin{document}

\allowdisplaybreaks

\renewcommand{\thefootnote}{$\star$}

\newcommand{\arXivNumber}{1602.07375}

\renewcommand{\PaperNumber}{052}

\FirstPageHeading

\ShortArticleName{Hypergeometric Dif\/ferential Equation and New Identities for the Coef\/f\/icients}

\ArticleName{Hypergeometric Dif\/ferential Equation\\ and New Identities for the Coef\/f\/icients\\ of N{\o}rlund and B\"{u}hring\footnote{This paper is a~contribution to the Special Issue on Orthogonal Polynomials, Special Functions and Applications.
The full collection is available at \href{http://www.emis.de/journals/SIGMA/OPSFA2015.html}{http://www.emis.de/journals/SIGMA/OPSFA2015.html}}}

\Author{Dmitrii KARP~$^{\dag\ddag}$ and Elena PRILEPKINA~$^{\dag\ddag}$}
\AuthorNameForHeading{D.~Karp and E.~Prilepkina}
\Address{$^\dag$~Far Eastern Federal University, 8~Sukhanova Str., Vladivostok, 690950, Russia}
\Address{$^\ddag$~Institute of Applied Mathematics, Far Eastern Branch of the Russian Academy of Sciences,\\
\hphantom{$^\ddag$}~7~Radio Str., Vladivostok, 690041, Russia}
\EmailD{\href{mailto:dimkrp@gmail.com}{dimkrp@gmail.com}, \href{mailto:pril-elena@yandex.ru}{pril-elena@yandex.ru}}
\URLaddressD{\url{http://dmkrp.wordpress.com}}

\ArticleDates{Received February 25, 2016, in f\/inal form May 15, 2016; Published online May 21, 2016}

\Abstract{The fundamental set of solutions of the generalized hypergeometric dif\/ferential equation in the neighborhood of unity has been built by N{\o}rlund in 1955. The behavior of the generalized hypergeometric function in the neighborhood of unity has been described in the beginning of 1990s by B\"{u}hring, Srivastava and Saigo. In the f\/irst part of this paper we review their results rewriting them in terms of Meijer's $G$-function and explaining the interconnections between them. In the second part we present new formulas and identities for the coef\/f\/icients that appear in the expansions of Meijer's $G$-function and generalized hypergeometric function around unity. Particular cases of these identities include known and new relations for Thomae's hypergeometric function and forgotten Hermite's identity for the sine function.}

\Keywords{generalized hypergeometric function; hypergeometric dif\/ferential equation; Meijer's $G$-function; Bernoulli polynomials; N{\o}rlund's coef\/f\/icients; B\"{u}hring's coef\/f\/icients}

\Classification{33C20; 33C60; 34M35}

\renewcommand{\thefootnote}{\arabic{footnote}}
\setcounter{footnote}{0}

\vspace{-2mm}

\section{Introduction}
We will use standard notation $\Z$, $\N$ and $\C$ to denote integer, natural and complex numbers, respectively; $\N_0=\N\cup\{0\}$. The hypergeometric dif\/ferential equation ($D=z\frac{d}{dz}$)
\begin{gather}\label{eq:hyper-equation}
\left\{(D-a_1)(D-a_2)\cdots(D-a_p)-z(D+1-b_1)(D+1-b_2)\cdots(D+1-b_p)\right\}y=0
\end{gather}
for $p=2$ was f\/irst considered by Euler and later studied by Gauss, Kummer, Riemann, Papperitz and Schwarz, among others, see \cite[Section~2.3]{AAR}. For general $p>2$ it was probably f\/irst investigated by Thomae~\cite{Thomae}. Note that our choice of parameters dif\/fers slightly from that of \cite[(16.8.3)]{NIST}. As will become apparent in the sequel, this choice is more convenient if the solution is to be built in terms of Meijer's $G$-functions and not generalized hypergeometric functions.
Equation~(\ref{eq:hyper-equation}) is of Fuchsian type and has three regular singularities located at the points~$0$,~$1$,~$\infty$.
 The local exponents read \cite[(2.6)--(2.8)]{BeukersHeckman}
\begin{alignat*}{3}
 & a_1, a_2,\ldots,a_p \quad && \text{at}~z=0, &
\\
 & 0, 1, 2, \ldots, p-2, \sum(b_i-a_i)-1 \quad && \text{at}~z=1, &
\\
 & 1-b_1,1-b_2,\ldots,1-b_p \quad && \text{at}~z=\infty.&
\end{alignat*}

The fundamental sets of solutions around the points $z=0$ and
$z=\infty$ were found by Thomae in \cite{Thomae} and are expressed
in terms of the generalized hypergeometric series
\begin{gather*}
{_{p}F_q}\left( \begin{matrix}\a\\ \b\end{matrix}\,\vline\,z \right)={_{p}F_q}\left(\a;\b;z\right)=
\sum\limits_{n=0}^{\infty}\frac{(a_1)_n(a_2)_n\cdots(a_{p})_n}{(b_1)_n(b_2)_n\cdots(b_q)_nn!}z^n,
\end{gather*}
where $(a)_n=\Gamma(a+n)/\Gamma(a)$ denotes the rising factorial
and $\a=(a_1,\ldots,a_p)$, $\b=(b_1,\ldots,b_q)$ are complex vectors such that $-b_i\notin\N_0$
for $i=1,\ldots,q$. If the components of $\a$ are distinct modulo~$\Z$,
then the basis of solutions near $z=0$ is given by
\cite[(1.13)]{Norlund}
\begin{gather*}
z^{a_k}{_{p}F_{p-1}}\left( \begin{matrix}1-\b+a_k\\ 1-\a_{[k]}+a_k\end{matrix}\,\vline\, z \right),\qquad k=1,\ldots,p,
\end{gather*}
where $\a_{[k]}$ signif\/ies the vector $\a$ with the element~$a_k$ omitted and $\a+\alpha$ is understood as $(a_1+\alpha$, $\ldots,a_p+\alpha)$. If the components of~$\b$ are distinct modulo $\Z$ then the fundamental system of solutions near $z=\infty$ is given by \cite[(1.14)]{Norlund}
\begin{gather*}
z^{b_k-1}{_{p}F_{p-1}}\left( \begin{matrix}1+\a-b_k\\1+\b_{[k]}-b_k\end{matrix}\,\vline\,\frac{1}{z} \right),\qquad k=1,\ldots,p.
\end{gather*}

Finally, the fundamental set of solutions around the point $z=1$ was found by the remarkable Danish mathematician Niels Erik
N{\o}rlund in his milestone work~\cite{Norlund}. These facts are well-known and have been frequently cited in the literature. It seems to be less known that the fundamental solutions constructed by N{\o}rlund can be expressed in terms of $G$-function introduced some 15~years earlier by Meijer, see~\cite{Meijer}. In fact, Meijer himself studied the hypergeometric
dif\/ferential equation more general than~(\ref{eq:hyper-equation}) and built the basis of solutions in the neighborhood of zero and inf\/inity in terms of $G$-functions. The connection between N{\o}rlund's solutions and Meijer's $G$-function was
observed by Marichev and Kalla~\cite{MK} and Marichev~\cite{Marichev} but these two papers remained largely unnoticed.
The fact that each solution around $z=1$ must equal to a linear combination of fundamental solutions around $z=0$ is ref\/lected for $p=2$ in the following identity due to Gauss \cite[Theorem~2.3.2]{AAR}
\begin{gather}
{_2F_1} \left(\begin{matrix}\alpha_1, \alpha_2\\\beta\end{matrix};1-z \right)=
\frac{\Gamma(\beta)\Gamma(\beta-\alpha_1-\alpha_2)}{\Gamma(\beta-\alpha_1)\Gamma(\beta-\alpha_2)}
{_2F_1} \left(\begin{matrix}\alpha_1,\alpha_2\\\alpha_1+\alpha_2-\beta+1\end{matrix};z \right)
\nonumber\\
\hphantom{{_2F_1} \left(\begin{matrix}\alpha_1, \alpha_2\\\beta\end{matrix};1-z \right)=}{}
+\frac{\Gamma(\beta)\Gamma(\alpha_1+\alpha_2-\beta)}{\Gamma(\alpha_1)\Gamma(\alpha_2)}z^{\beta-\alpha_1-\alpha_2}
{_2F_1} \left(\begin{matrix}\beta-\alpha_1,\beta-\alpha_2\\\beta-\alpha_1-\alpha_2+1\end{matrix};z \right).\label{eq:2F1connection}
\end{gather}
The fact that each solution around $z=0$ must equal to a linear combination of fundamental solutions around $z=1$ leads to
essentially the same identity with $z$ replaced by $1-z$. For $p>2$, however the above connection formula has \emph{two
different generalizations}. One of them is the expansion (\ref{eq:Gp0pp}) of~$G^{p,0}_{p,p}$ (a solution around $z=1$) into a sum of ${}_pF_{p-1}$ (fundamental solutions around $z=0$) which can also be obtained by applying the residue theorem to the def\/inition of $G$-function and is found in standard references on $G$-function, see, for instance, \cite[(16.17.2)]{NIST}. The other one is the expansion of ${}_pF_{p-1}$ into a sum of $G^{p,0}_{p,p}$ with $p-1$ instances of~$G^{2,p}_{p,p}$ as given by formula~(\ref{eq:Norlund5.40}) below. This expansion, discovered by N{\o}rlund without any mentioning of $G$-function has been reproduced in~\cite{MK} in a slightly dif\/ferent form but seems to be forgotten afterwards. A related expansion, again with no reference to $G$-function, has been then found in \cite{Buehring87,Buehring92} by B\"{u}hring whose goal was to describe the behavior of ${}_pF_{p-1}(z)$ near $z=1$. The logarithmic cases have been studied by Saigo and Srivastava in~\cite{SS}. Let us also mention that the monodromy group of the generalized hypergeometric dif\/ferential equation has been constructed by Beukers and Heckman in~\cite{BeukersHeckman}.

This paper is organized as follows. Section~\ref{section2} is of survey nature: we review the results of N{\o}rlund and B\"{u}hring and rewrite them in terms of $G$-function. We also reveal connections between N{\o}rlund's and B\"{u}hring's expansions and relate them to various results obtained in statistics literature. In Section~\ref{section3} we present some new formulas for N{\o}rlund's coef\/f\/icients and utilize the above mentioned connections to derive various new identities for N{\o}rlund's and B\"{u}hring's coef\/f\/icients which reduce to hypergeometric and trigonometric identities for small~$p$. The most striking of these identities,
\begin{gather}\label{eq:Ptol}
\sum\limits_{k=1}^{p}\frac{\prod\limits_{i=1}^{p}\sin(\beta_i-\alpha_k)}
{\prod\limits_{\substack{i=1\\i\ne{k}}}^{p}\sin(\alpha_i-\alpha_k)}
=\sin\left(\sum\limits_{k=1}^{p}(\beta_k-\alpha_k)\right),
\end{gather}
can be viewed as a generalization of Ptolemy's theorem: for a quadrilateral inscribed in a circle the product of the lengths of its diagonals is equal to the sum of the products of the lengths of the pairs of opposite sides. This theorem can be written in trigonometric form which yields precisely the above identity for $p=2$. For $p=3$ formula~(\ref{eq:Ptol}) was f\/irst presented without proof by Glaisher at a 1880 conference and then proved in full generality by Hermite in his 1885 paper~\cite{Hermite}. The same paper contains a number of other amazing identities completely forgotten until the recent article~\cite{Johnson} by Johnson, where many interesting mathematical and historical details can be found. Furthermore, if we substitute $\sin(z)$ by~$z$ in every occurrence of $\sin$ in~(\ref{eq:Ptol}), we obtain an identity discovered by Gosper, Ismail and Zhang in~\cite{GIZ} and known as non-local derangement identity. It has been recently used by Feng, Kuznetsov and Yang to f\/ind new formulas for sums of products of generalized hypergeometric functions~\cite{Kuznetsov}.

When this paper was nearly f\/inished E.~Scheidegger published a preprint \cite{Scheidegger} that also deals with the hypergeometric dif\/ferential equation (\ref{eq:hyper-equation}) and builds largely on the works of N{\o}rlund~\cite{Norlund} and B\"{u}hring~\cite{Buehring92}. Scheidegger suggests a new basis of solutions around~$1$ and presents its series expansion. Main emphasis in~\cite{Scheidegger} is on the logarithmic cases (parameter dif\/ferences are integers). Scheidegger's work is motivated by applications in certain one-parameter families of Calabi--Yau manifolds, known as the mirror quartic and the mirror quintic.

\section{Results of N{\o}rlund and B\"{u}hring revisited}\label{section2}

\subsection{Fundamental solutions around unity}
The simple observation that nevertheless seems to be largely overlooked in the literature (except for \cite{Marichev,MK}) lies in the fact that N{\o}rlund's solutions to (\ref{eq:hyper-equation}) can be expressed in terms of Meijer's~$G$-function.
Let us remind its def\/inition f\/irst. Suppose $0\leq{m}\leq{q}$, $0\leq{n}\leq{p}$ are integers and~$\a$,~$\b$ are arbitrary complex vectors, such that $a_i-b_j\notin\N$ for all $i=1,\ldots,n$ and $j=1,\ldots,m$.
Meijer's $G$-function is def\/ined by the Mellin--Barnes integral of the form (see \cite[Section~5.3]{HTF1},
\cite[Chapter~1]{KilSaig}, \cite[Section~8.2]{PBM3} or \cite[Section~16.17]{NIST}),
\begin{gather}
G^{m,n}_{p,q} \left( z\,\vline\, \begin{matrix}\a\\\b\end{matrix} \right) :=
\frac{1}{2\pi{i}}
\int_{\mathcal{L}} \frac{\Gamma(b_1 + s)\cdots\Gamma(b_m + s)\Gamma(1-a_1 - s)\cdots\Gamma(1-a_n - s)z^{-s}}
{\Gamma(a_{n+1} + s)\cdots\Gamma(a_p + s)\Gamma(1-b_{m+1} - s)\cdots\Gamma(1-b_{q} - s)}ds,\label{eq:G-defined}
\end{gather}
where the contour $\L$ is a simple loop that separates the poles of the integrand of the form $b_{jl}=-b_j-l$, $l\in\N_0$, leaving them on the left from the poles of the form $a_{ik}=1-a_i+k$, $k\in\N_0$, leaving them on the right \cite[Section~1.1]{KilSaig}. It may have one of the three forms $\L_{-}$, $\L_{+}$ or $\L_{i\gamma}$ described below. Choose any
\begin{gather*}
\begin{split}
& \varphi_1<\min\{-\Im{b_1},\ldots,-\Im{b_m},\Im(1-a_1),\ldots,\Im(1-a_n)\},
\\
& \varphi_2>\max\{-\Im{b_1},\ldots,-\Im{b_m},\Im(1-a_1),\ldots,\Im(1-a_n)\}
\end{split}
\end{gather*}
and arbitrary real $\gamma$. The contour $\L_{-}$ is a left
loop lying in the horizontal strip $\varphi_1\leq\Im{s}\leq\varphi_2$. It starts at the point
$-\infty+i\varphi_1$, terminates at the point $-\infty+i\varphi_2$ and coincides with the boundary of the strip for suf\/f\/iciently large~$|s|$. Similarly, the contour~$\L_{+}$ is a right loop lying in the same strip, starting at the point $+\infty+i\varphi_1$ and terminating at the point $+\infty+i\varphi_2$. Finally, the
contour $\L_{i\gamma}$ starts at $\gamma-i\infty$, terminates at $\gamma+i\infty$ and coincides with the line $\Re{s}=\gamma$ for
all suf\/f\/iciently large $|s|$. The power function~$z^{-s}$ is def\/ined on the Riemann surface of the logarithm, so that
\begin{gather*}
z^{-s}=\exp(-s\{\log|z|+i\arg(z)\})
\end{gather*}
and $\arg(z)$ is allowed to take any real value. Hence,
$G^{m,n}_{p,q}(z)$ is also def\/ined on the Riemann surface of the
logarithm. In order that the above def\/inition be consistent one needs to prove that the value of the
integral remains intact if convergence takes place for several dif\/ferent contours. Alternatively, one may split the parameter
space into nonintersecting subsets and stipulate which contour should be used in each subset. Another key issue that must be
addressed with regard to the above def\/inition is whether the integral in (\ref{eq:G-defined}) equals the sum of residues of the integrand and, if yes, on which side of~$\L$ the residues are to be counted. This is important for both theoretical considerations (expressing $G$-function in terms of hypergeometric functions) and especially for actually computing the value of $G$-function (although numerical contour integration can also be employed). In this paper we will only need the $G$-function of the form $G^{m,n}_{p,p}$. For this particular type of $G$-function the solutions to the above problems seem to be rather complete. We placed further details regarding the def\/inition and answers to the above questions for $G^{m,n}_{p,p}$ in Appendix~\ref{appendixA}.

A simple property of Meijer's $G$-function implied by its def\/inition~(\ref{eq:G-defined}) which will be frequently used without further mentioning is given by \cite[(8.2.2.15)]{PBM3}
\begin{gather}\label{eq:Gtimespower}
z^{\alpha}G^{m,n}_{p,q} \left( z\,\vline\,\begin{matrix}\a\\\b\end{matrix} \right)=G^{m,n}_{p,q} \left( z\,\vline\,\begin{matrix}\a+\alpha\\ \b+\alpha\end{matrix} \right),
\end{gather}
where $\alpha\in\C$ and $\a+\alpha$ is understood as $(a_1+\alpha,\ldots,a_p+\alpha)$.

The functions $G^{p,0}_{p,p}$ and $G^{2,p}_{p,p}$ will play a particularly important role in this paper, so that we found it useful to cite the known explicit expressions for small~$p$. If $p=1$ then \cite[formula~(8.4.2.3)]{PBM3}
\begin{gather*}
G^{1,0}_{1,1} \left( z\,\vline\,\begin{matrix}b\\ a\end{matrix} \right)=
\frac{z^a(1-z)_{+}^{b-a-1}}{\Gamma(b-a)},
\end{gather*}
where $(x)_+=x$ for $x\ge0$ and $0$ otherwise. If $p=2$ we have \cite[formula~(8.4.49.22)]{PBM3}
\begin{gather*}
G^{2,0}_{2,2} \left( z\, \vline\, \begin{matrix}b_1,b_2
\\ a_1,a_2\end{matrix} \right)=\frac{z^{a_2}(1-z)_{+}^{b_1+b_2-a_1-a_2-1}}{\Gamma(b_1+b_2-a_1-a_2)}
\, {}_2F_{1} \left(\begin{matrix}b_1-a_1,b_2-a_1
\\ b_1+b_2-a_1-a_2\end{matrix} ;1-z\right),
\end{gather*}
and \cite[formula~(8.4.49.20)]{PBM3}
\begin{gather*}
G^{2,2}_{2,2} \left( z\,\vline\, \begin{matrix}b_1,b_2
\\ a_1,a_2\end{matrix} \right)=\frac{z^{a_1}\Gamma(1+a_1-b_1)\Gamma(1+a_1-b_2)\Gamma(1+a_2-b_2)\Gamma(1+a_2-b_1)}{\Gamma(2+a_1+a_2-b_1-b_2)}
\\
\hphantom{G^{2,2}_{2,2} \left( z\,\vline\, \begin{matrix}b_1,b_2
\\ a_1,a_2\end{matrix} \right)=}{}
\times{_2F_{1}} \left(\begin{matrix}1+a_1-b_1,1+a_1-b_2\\2+a_1+a_2-b_1-b_2\end{matrix};1-z\right).
\end{gather*}
If $p=3$ we have \cite[formula~(8.4.51.2)]{PBM3}{\samepage
\begin{gather*}
\begin{split}
& G^{3,0}_{3,3} \left( z\,\vline\,\begin{matrix}b_1,b_2,b_3\\ a_1,a_2,a_3\end{matrix} \right)
=\frac{z^{a_1+a_2-b_1-1}(1-z)_{+}^{b_1+b_2+b_3-a_1-a_2-a_3-1}}{\Gamma(b_1+b_2+b_3-a_1-a_2-a_3)}\\
& \qquad{} \times{F_3}(b_1-a_2,b_3-a_3;b_1-a_1,b_2-a_3;b_1+b_2+b_3-a_1-a_2-a_3;1-1/z,1-z),
\end{split}
\end{gather*}
where $F_3$ is Appell's hypergeometric function of two variables \cite[index of functions]{PBM3}.}

Having made these preparations we can formulate N{\o}rlund's result regarding a fundamental solution of~(\ref{eq:hyper-equation}) in the neighborhood of~$1$. First, introduce the following notation:
\begin{gather*}
\psi_m=\sum_{i=1}^m (b_i-a_i),\qquad 1\leq m\leq p,\nonumber\\
 \a=(a_1,\ldots,a_p),\qquad \b=(b_1,\ldots,b_p), \qquad \a_{[k]}=(a_1,\ldots,a_{k-1},a_{k+1},\ldots,a_p),\label{eq:notation}\\
\sin(\a)=\sin{a_1}\sin{a_2}\cdots\sin{a_p},\qquad \Gamma(\a)=\Gamma(a_1)\Gamma(a_2)\cdots\Gamma (a_p)
\end{gather*}
and let $\a_{[k,s]}$ denote the vector $\a$ with the elements $a_k$ and $a_s$
removed. We write $\Re(\a)>0$ for $\Re(a_i)>0$, $i=1,\dots,p$. The next theorem is implicit in~\cite{Norlund}.

\begin{Theorem}[N{\o}rlund]\label{th:N2}
Suppose $k,s\in\{1,\ldots,p\}$ and
\begin{gather*}
u_s(z)=G^{p,0}_{p,p} \left( z\, \vline\, \begin{matrix}\b\\ \a\end{matrix} \right),\qquad u_k(z)=G^{2,p}_{p,p} \left( z\,\vline\,\begin{matrix}\b \\ a_k,a_s,\a_{[k,s]}\end{matrix} \right)\qquad \text{for} \quad k\neq{s}.
\end{gather*}
Then the set $\{u_k(z)\}_{k=1}^p$ forms the fundamental system of solutions of \eqref{eq:hyper-equation} in the neighborhood of $z=1$ and
\begin{gather}
\sin(\pi\psi_p)\frac{z^{a_s}\Gamma(1-\b+a_s)}{\Gamma(1-\a_{[s]}+a_s)}
\, {}_{p}F_{p-1}\left(\begin{matrix}1-\b+a_s\\ 1-\a_{[s]}+a_s\end{matrix};z\right)\nonumber\\
\qquad{} =\pi G^{p,0}_{p,p} \left( z\,\vline\, \begin{matrix}\b
\\ \a\end{matrix} \right)
-\frac{1}{\pi}\sum_{\substack{k=1\\k\ne{s}}}^{p} \frac{\sin(\pi(\b-a_k))}{\sin(\pi(\a_{[k,s]}-a_k))}
G^{2,p}_{p,p} \left( z\,\vline\,\begin{matrix}\b\\ a_k,a_s,\a_{[k,s]}\end{matrix} \right).\label{eq:Norlund5.40}
\end{gather}
\end{Theorem}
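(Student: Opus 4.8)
\emph{The plan is} to prove the three assertions separately: that each $u_k$ solves \eqref{eq:hyper-equation}, that the connection formula \eqref{eq:Norlund5.40} holds, and that $\{u_k\}_{k=1}^p$ is linearly independent near $z=1$. For the first I would argue straight from the Mellin--Barnes representation \eqref{eq:G-defined}. Because $D=z\frac{d}{dz}$ sends $z^{-s}$ to $-sz^{-s}$, applying $\prod_{i}(D-a_i)$ under the integral multiplies the integrand by $\prod_i(-s-a_i)$, whereas $z\prod_i(D+1-b_i)$ multiplies it by $\prod_i(-s+1-b_i)$ and shifts $s\mapsto s+1$. Writing the integrand of $u_s$ as $\prod_j\Gamma(a_j+s)/\prod_i\Gamma(b_i+s)$ and that of $u_k$ ($k\neq s$) with the two factors $\Gamma(a_k+s)\Gamma(a_s+s)$ in the numerator, the functional equation $\Gamma(x+1)=x\Gamma(x)$ shows that the two multipliers coincide after the shift; equivalently the sign $(-1)^{p-m-n}$ in the standard differential equation for $G^{m,n}_{p,p}$ equals $+1$ for both $(m,n)=(p,0)$ and $(m,n)=(2,p)$, so each $u_k$ satisfies \eqref{eq:hyper-equation} exactly. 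The only delicate point is the admissibility of the contour shift, which I would justify using the convergence analysis for $G^{m,n}_{p,p}$ in Appendix~\ref{appendixA}.

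The connection formula is the heart of the matter, and \emph{I would} prove it by expanding the two $G$-functions on the right of \eqref{eq:Norlund5.40} around $z=0$ by the residue theorem and comparing against the Thomae basis $z^{a_j}{}_pF_{p-1}(1-\b+a_j;1-\a_{[j]}+a_j;z)$, $j=1,\dots,p$. Summing $\res_{s=-a_j-\nu}$ of the integrand of $G^{p,0}_{p,p}$ produces $\sum_{j=1}^{p}A_j z^{a_j}{}_pF_{p-1}(\cdots)$ with $A_j=\Gamma(\a_{[j]}-a_j)/\Gamma(\b-a_j)$, the repeated factors $(-1)^\nu$ cancelling so that the hypergeometric argument is $+z$; likewise each $u_k=G^{2,p}_{p,p}$ contributes only the two terms indexed by $a_k$ and $a_s$, with explicit quotients of Gamma functions as coefficients. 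Substituting these expansions into the right-hand side of \eqref{eq:Norlund5.40}, I would then collect, for every $j$, the total coefficient of $z^{a_j}{}_pF_{p-1}(\cdots)$.

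For $j\neq s$ only the $G^{p,0}_{p,p}$ term and the single $u_k$ with $k=j$ contribute, and the coefficient is $\pi A_j-\pi^{-1}\frac{\sin(\pi(\b-a_j))}{\sin(\pi(\a_{[j,s]}-a_j))}B_j$, where $B_j$ is the $a_j$-coefficient of that $u_j$; a direct application of the reflection formula $\Gamma(x)\Gamma(1-x)=\pi/\sin(\pi x)$ to $B_j$ turns this into $\pi A_j-\pi A_j=0$. For $j=s$ the $G^{p,0}_{p,p}$ term together with the $a_s$-parts of all $p-1$ functions $u_k$ survive; converting every Gamma-quotient to sines by reflection, the needed equality collapses to $\sum_{k=1}^{p}\prod_i\sin(\pi(b_i-a_k))\big/\prod_{l\neq k}\sin(\pi(a_l-a_k))=\sin(\pi\psi_p)$, which is exactly the Ptolemy--Hermite identity \eqref{eq:Ptol}. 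Granting it (as the classical fact of Hermite, so as to avoid circularity with its later rederivation from the theory), the surviving coefficient is precisely $\sin(\pi\psi_p)\Gamma(1-\b+a_s)/\Gamma(1-\a_{[s]}+a_s)$, which establishes \eqref{eq:Norlund5.40}.

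Finally, to see that $\{u_k\}_{k=1}^p$ forms a fundamental system near $z=1$, \emph{I would} match local exponents. The explicit low-order formulas quoted above and the behavior-at-unity analysis of Appendix~\ref{appendixA} show that $u_s=G^{p,0}_{p,p}$ realizes the exponent $\psi_p-1$ (its leading term is $(1-z)^{\psi_p-1}/\Gamma(\psi_p)$), while the $p-1$ functions $G^{2,p}_{p,p}$ are analytic at $z=1$ and realize the remaining exponents $0,1,\dots,p-2$; as these $p$ exponents are generically distinct, the $u_k$ are linearly independent, and together with the first part this makes them a basis. \emph{The hard part} throughout is analytic rather than algebraic: rigorously justifying the contour deformations and residue summations---convergence, the admissible choice of contour $\L_{\pm}$ and of the branch of $z^{a_j}$, and the region on the Riemann surface of the logarithm in which the $z=0$ expansion represents the given $G$-function---for which I would rely on the detailed treatment of $G^{m,n}_{p,p}$ in Appendix~\ref{appendixA}.
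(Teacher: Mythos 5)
Your proposal takes a genuinely different route from the paper. The paper's proof is pure translation: it observes that \eqref{eq:Norlund5.40} is N{\o}rlund's formula (5.40) rewritten, and supplies the dictionary between N{\o}rlund's functions $\xi_n$, $\varphi_s$, $y_{k,s}$ and the $G$-functions $G^{p,0}_{p,p}$, $G^{2,p}_{p,p}$; nothing is re-derived. You instead prove the connection formula from scratch by expanding every term in the Thomae basis at $z=0$ (via Theorem~\ref{th:sumres}(a)) and matching coefficients, and this computation is sound: for $j\neq s$ the coefficient of $z^{a_j}{}_pF_{p-1}$ cancels by the reflection formula exactly as you describe, and for $j=s$ the required equality collapses to $\sum_{k}\sin(\pi(\b-a_k))/\sin(\pi(\a_{[k]}-a_k))=\sin(\pi\psi_p)$. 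Importing Hermite's classical proof of that identity is the right move, since the paper obtains it (Corollary~\ref{cr:ptolemy}) as a consequence of Theorem~\ref{th:N2} and you would otherwise be circular. The Mellin--Barnes derivation of the differential equation, with the sign $(-1)^{p-m-n}=1$ for both $(m,n)=(p,0)$ and $(m,n)=(2,p)$, is also standard and correct. What your route buys is a self-contained verification (modulo Hermite) that does not require unwinding N{\o}rlund's notation; what the paper's route buys is brevity and correct attribution. You should, however, state the genericity hypotheses explicitly: the residue expansions of $G^{p,0}_{p,p}$ into $p$ Thomae solutions and of $G^{2,p}_{p,p}$ into two require the components of $\a$ to be distinct modulo $\Z$ (simple poles), with the general case recovered by continuity in the parameters.

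The one genuine gap is the fundamental-system claim. Asserting that the $p-1$ functions $G^{2,p}_{p,p}$ ``are analytic at $z=1$ and realize the remaining exponents $0,1,\dots,p-2$'' does not establish linear independence: by \eqref{eq:G2pppNorlund} each of them is analytic at $z=1$ with generically nonzero value there, so each individually realizes only the exponent $0$, and nothing you have written excludes a linear relation among them. To close this you must either exhibit $p-1$ linear combinations whose leading terms at $z=1$ are $(1-z)^0,\dots,(1-z)^{p-2}$, or show that the $(p-1)\times(p-1)$ matrix of Taylor coefficients $D_j^{[k,s]}$, $0\le j\le p-2$, $k\ne s$, is nonsingular, or invoke the independence of N{\o}rlund's $y_{k,s}$ directly. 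Alternatively, independence follows indirectly once \eqref{eq:Norlund5.40} is established for all $s$: the $p$ relations express the $p$ independent Thomae solutions in terms of $u_s$ and the $u_k$, so the span of $\{u_k\}_{k=1}^p$ is all of the $p$-dimensional solution space; spelling this out would be the cleanest repair.
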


 \begin{proof} Formula (\ref{eq:Norlund5.40}) is a rewriting of \cite[(5.40)]{Norlund}. Indeed, f\/irst compare \cite[(2.44)]{Norlund} with the def\/inition of $G$-function (\ref{eq:G-defined}) to see how N{\o}rlund's~$\xi_n$ is expressed by~$G^{p,0}_{p,p}$. Alternatively, this connection follows on comparing the Mellin transforms (\ref{eq:GMellin}) and \cite[(2.18)]{Norlund}. Further, use \cite[(5.45)]{Norlund} to expand N{\o}rlund's~$\varphi_s$ and~\cite[(5.7)]{Norlund} to express~$y_{k,s}$ in terms of~$G^{2,p}_{p,p}$ (see also~(\ref{eq:Norl-y12}) below).
 \end{proof}

\begin{Remark}
Formula~(\ref{eq:Norlund5.40}) can now be viewed as the ref\/lection of the fact that any $p+1$ solutions must
be linearly dependent so that any solution in the neighborhood of $0$ can be expressed in terms of the fundamental set of solutions around~$1$. This formula extends the connection formula (\ref{eq:2F1connection}) for the Gauss hypergeometric function to which it reduces when $p=2$. A closely related formula has been also discovered by B\"{u}hring in two papers~\cite{Buehring87} (for $p=3$) and~\cite{Buehring92} (for general~$p$). See formula~(\ref{eq:Buehring1}) below.
\end{Remark}

\begin{Remark}
Marichev and Kalla reproduced formula \cite[(5.40)]{Norlund} in \cite[(11)]{MK} and gave expressions for its components in terms of~$G$-function in \cite[(27), (33), (36)]{MK}.
 \end{Remark}

Expansion of the solution $G^{p,0}_{p,p}$ in terms of the fundamental solutions around $z=0$ that compliments~(\ref{eq:Norlund5.40}) coincides with the well-known expansion obtained from the def\/inition of $G$-function by the residue theorem. Namely, if the elements of the vector $\a$ are dif\/ferent modulo integers, formula (\ref{eq:sumresleft}) 
applied to the function~$G^{p,0}_{p,p}$ takes the form (see also \cite[formula~(34)]{Marichev} and~\cite[8.2.2.3]{PBM3}):
\begin{gather}\label{eq:Gp0pp}
G^{p,0}_{p,p} \left( z\,\vline\, \begin{matrix}\b\\ \a\end{matrix} \right)=
\sum\limits_{k=1}^{p}z^{a_k}\frac{\Gamma(\a_{[k]}-a_k)}{\Gamma(\b-a_k)}
\,{}_pF_{p-1} \left( \begin{matrix}1-\b+a_k \\ 1-\a_{[k]}+a_k\end{matrix}\,\vline\, z\right).
\end{gather}
According to Theorem~\ref{th:sumres}(a) this formula holds for $|z|<1$.

\subsection[Expansion of $G^{p,0}_{p,p}$ in the neighborhood of unity]{Expansion of $\boldsymbol{G^{p,0}_{p,p}}$ in the neighborhood of unity}

Among many other results contained in \cite{Norlund} N{\o}rlund showed that the series
\begin{gather}\label{eq:Norl-xi}
G^{p,0}_{p,p} \left( z\,\vline\,\begin{matrix}\b
\\ \a\end{matrix} \right)=\frac{z^{a_k}(1-z)^{\psi_p-1}}{\Gamma(\psi_p)}\sum_{n=0}^{\infty}\frac{g_p^k(n)}{(\psi_p)_n}(1-z)^n
\end{gather}
represents a solution in the neighborhood of $z=1$ corresponding to the local exponent~$\psi_p-1$ (see~(\ref{eq:notation}) for the def\/inition of $\psi_p$) if this number is not a negative integer. Formula~(\ref{eq:Norl-xi}) holds in the disk $|1-z|<1$ for all $-\psi_p\notin\N_0$ and each $k=1,2,\ldots,p$. For $-\psi_p=l\in\N_0$ we have by taking limit in~(\ref{eq:Norl-xi})
(see \cite[(1.34)]{Norlund}):
\begin{gather}\label{eq:Norlund2}
G^{p,0}_{p,p} \left( z\,\vline\, \begin{matrix}\b\\\ a\end{matrix} \right)
=z^{a_k}\sum\limits_{n=0}^{\infty}\frac{g^{k}_{p}(n+l+1)}{n!}(1-z)^n,\qquad k=1,2,\ldots,p.
\end{gather}
The value of $k$ in (\ref{eq:Norl-xi}) and (\ref{eq:Norlund2}) can be chosen arbitrarily from the set~$\{1,2,\ldots,p\}$. This choice af\/fects the f\/irst factor~$z^{a_k}$ and the coef\/f\/icients~$g_p^k(n)$ while the left-hand side is of course independent of~$k$.

Let us present N{\o}rlund's formulas for the coef\/f\/icients $g^{k}_{p}(n)$. First, by applying
the Frobenius ansatz to the dif\/ferential equation~(\ref{eq:hyper-equation}) N{\o}rlund demonstrated that~$g^k_p(n)$
satisfy the $p$-th order dif\/ference equation with polynomial coef\/f\/icients given by
\begin{gather}\label{eq:Norl-recur}
\sum_{i=0}^{p-1}P_{p-i}(n\,|\,\a,\b)g^{k}_{p}(n+i)=0,\qquad n=1,2,\ldots,
\end{gather}
where $P_{m}(z\,|\,\a,\b)$ is a polynomial in $z$ of degree $m$ with coef\/f\/icients dependent on the pa\-ra\-me\-ters~$\a$ and~$\b$. It is expressed in terms of the polynomials
\begin{gather*}
Q(z)=\prod_{i=1}^{p}(z-a_i),\qquad R(z)=\prod_{i=1}^{p}(z+1-b_i)
\end{gather*}
as follows \cite[(1.28)]{Norlund}:
\begin{gather*}
 P_1(z\,|\,\a,\b)=p-1+z,\\
P_{j}(z\,|\,\a,\b)=\frac{(-1)^{j}}{(p-j-1)!}\Delta^{p-j-1}Q(\psi_p+a_k+z)
-\frac{(-1)^{j}}{(p-j)!}\Delta^{p-j}R(\psi_p-1+a_k+z)
\end{gather*}
for $j=2,\ldots,p-1$, and
\begin{gather*}
P_p(z\,|\,\a,\b)=-(-1)^{p}R(\psi_p-1+a_k+z).
\end{gather*}
Here the dif\/ference is understood as the forward dif\/ference $\Delta{Q(z)}=Q(z+1)-Q(z)$,
$\Delta^{m}{Q(z)}=\Delta(\Delta^{m-1}{Q(z)})$. The initial values
$g^k_p(0)$, $g^k_p(1)$, $\ldots$, $g^k_p(p-1)$ for the recurrence~(\ref{eq:Norl-recur}) are found by solving the next triangular system:
\begin{gather*}
\begin{split}
&g^{k}_{p}(0)=1,\\
&g^{k}_{p}(1)+P_{2}(2-p\,|\,\a,\b)g^{k}_{p}(0)=0,\\
&2g^{k}_{p}(2)+P_{2}(3-p\,|\,\a,\b)g^{k}_{p}(1)+P_{3}(3-p\,|\,\a,\b)g^{k}_{p}(0)=0,\\
&3g^{k}_{p}(3)+P_{2}(4-p\,|\,\a,\b)g^{k}_{p}(2)+P_{3}(4-p\,|\,\a,\b)g^{k}_{p}(1)+P_{4}(4-p\,|\,\a,\b)g^{k}_{p}(0)=0,\\
&\cdots\cdots\cdots\cdots\cdots\cdots\cdots\cdots\cdots\cdots\cdots\cdots\cdots\cdots\cdots\cdots\cdots\cdots\cdots\cdots\cdots\cdots\cdots\cdots\cdots\cdots\\
&(p-1)g^{k}_{p}(p-1)+P_{2}(0\,|\,\a,\b)g^{k}_{p}(p-2)+P_{3}(0\,|\,\a,\b)g^{k}_{p}(p-3)+\!\cdots\!+P_{p}(0\,|\,\a,\b)g^{k}_{p}(0)=0.
\end{split}
\end{gather*}
Another method to compute the coef\/f\/icients $g^k_{p}(n)$ discovered
by N{\o}rlund is the following recurrence in~$p$~\cite[(2.7)]{Norlund}:
\begin{gather}
g^{1}_{1}(n)= \begin{cases}1,& n=0,\\ 0,& n\ge1,\end{cases} \qquad g^{p}_{p}(n)=\sum_{j=0}^{n}\frac{(b_p-a_k)_{n-j}}{(n-j)!}(\psi_{p-1}+j)_{n-j}g^k_{p-1}(n).\label{Norl-rec-p}
\end{gather}
The last formula can be applied for any $k=1,2,\ldots,p-1$ without af\/fecting the left-hand side. The value of $g^{k}_{p}(n)$ for $k\ne{p}$ can then be obtained by exchanging the roles of $a_p$ and $a_k$ in the resulting expression for $g^{p}_{p}(n)$.
Alternatively, N{\o}rlund gives the following connection formula \cite[(1.35)]{Norlund}:
\begin{gather*}
g^{k}_{p}(n)=\sum\limits_{j=0}^{n}\frac{(a_k-a_l)_{n-j}}{(n-j)!}(\psi_p+j)_{n-j}g^{l}_{p}(j),\qquad k,l=1,\ldots,p,\qquad k\ne{l}.
\end{gather*}
Furthermore, he solved the above recurrence to obtain
\cite[(2.11)]{Norlund}:
\begin{gather}\label{eq:Norlund-explicit}
g^{p}_{p}(n)=\sum\limits_{0\leq{j_{1}}\leq{j_{2}}\leq\cdots\leq{j_{p-2}}\leq{n}}
\prod\limits_{m=1}^{p-1}\frac{(\psi_m+j_{m-1})_{j_{m}-j_{m-1}}}{(j_{m}-j_{m-1})!}(b_{m+1}-a_{m})_{j_{m}-j_{m-1}},
\end{gather}
where, as before, $\psi_m=\sum_{i=1}^{m}(b_i-a_i)$ and $j_0=0$, $j_{p-1}=n$.
This formula shows explicitly that $g^{p}_{p}(n)$ does not depend on $a_p$. Observe that
\begin{itemize}\itemsep=0pt
\item the coef\/f\/icient $g^{k}_{p}(n)$ is a symmetric polynomial in the components of~$\b$ and $\a_{[k]}$ (separately);
\item the summation in (\ref{eq:Norlund-explicit}) is over all Young diagrams that f\/it $n\times{(p-2)}$ box.
\end{itemize}
Symmetry follows from expansion (\ref{eq:Norl-xi}) and the invariance of $G$-function with respect to permutation the elements of $\a$ and $\b$. In Section~\ref{section3} below we f\/ind another way to compute the coef\/f\/icients $g_{p}^{k}(n)$ in terms of generalized Bernoulli polynomials and we further give explicit formulas for~$g_{p}^{k}(n)$, $n=1,2,3$. Connection of these coef\/f\/icients to combinatorics deserves further investigation.

\begin{Remark}
Formulas (\ref{eq:Norlund5.40}) and (\ref{eq:Norl-xi}) have been reproduced by Marichev \cite[(4), (17)]{Marichev} and Marichev and Kalla in \cite[(12), (27)]{MK}. Surprisingly, these important formulas seem to have been largely overlooked in the special function literature and have never been included in any textbook on hypergeometric functions, except for the reference book \cite{PBM3} by Prudnikov, Brychkov and Marichev. However, even in this book the description of the behavior of $G^{p,0}_{p,p}(z)$ in the neighborhood of $z=1$ contains an incorrect assertion in the case of non-positive integer~$\psi_p$, see \cite[Section~8.2.2.59]{PBM3}. None of the identities presented in Section~\ref{section3} below are contained in \cite{Marichev,MK,PBM3}.
\end{Remark}

\begin{Remark}
 Another very prolif\/ic line of research that involves $G$-function forms an important part of the statistics literature and began with 1932 paper of Wilks \cite{Wilks}, where he observed that the moments of many likelihood ratio criteria in multivariate hypothesis testing are expressed in terms of product ratios of gamma functions. Wilks introduced two types of integral equations, of which ``type B'' is essentially equation~(\ref{eq:GMellin}) of the Appendix. He also noticed that the solution of ``type B integral equation'' represents the probability density of the product of independent beta distributed random variables. Wilks' ideas were elaborated in dozens of papers that followed, mainly concerned with calculating and approximating the solution of ``type B integral equation''. We will just mention a few key contributions, where an interested reader may f\/ind further references. In his 1939 paper~\cite{Nair} Nair derived the dif\/ferential equation (\ref{eq:hyper-equation}) satisf\/ied by the solution of ``type B integral equation'' thus demonstrating that $G^{p,0}_{p,p}$ satisf\/ies~(\ref{eq:hyper-equation}). This happened long before N{\o}rlund wrote his paper~\cite{Norlund} and was also independent of Meijer's work, although Meijer already introduced $G$-function as a linear combination of hypergeometric functions in 1936. Nair also was the f\/irst (among researchers in statistics) to apply the inverse Mellin transform to the right-hand side of~(\ref{eq:GMellin})~-- the approach further developed by Consul in a series of papers between 1964 and 1969, where the connection to Meijer's $G$-function was f\/irst observed. See~\cite{Consul} and references therein. Mellin transform technique was then utilized in a number of papers by Mathai who later rediscovered N{\o}rlund's coef\/f\/icients in a form similar to~(\ref{eq:Norlund-explicit}) in~\cite{Mathai84}. Mathai's and other contributions until 1973 are described in his survey paper~\cite{Mathai73}. In the same period Springer and Thompson independently expressed the densities of products and ratios of gamma and beta distributed random variables in terms of Meijer's $G$-function, see~\cite{Springer-Thompson}. Davis~\cite{Davis} presented the matrix form of the dif\/ferential equation for $G^{p,0}_{p,p}$ and suggested the series solution similar to~(\ref{eq:Norl-xi}) with coef\/f\/icients found by certain recursive procedure. Another noticeable contribution is due to Gupta and Tang who found two series expansions for $G^{p,0}_{p,p}$ (again using ``type B integral equation'' terminology), one of them equivalent to (\ref{eq:Norl-xi}), and rediscovered recurrence relation (\ref{Norl-rec-p}). See \cite{TangGupta84,TangGupta86} and references there. This line of research continues until today, as evidenced, for example, by a series of papers by Carlos Coelho with several co-authors, see \cite{CoelhoArnold} and references there. Furthermore, Charles Dunkl rediscovered N{\o}rlund's recurrence (\ref{eq:Norl-recur}) in his preprint \cite{Dunkl} dated 2013, where he again considers the probability density of a product of beta distributed random variables. Independently, probability distribution with $G$-function density has been found to be the stationary distribution of certain Markov chains considered, for example, in actuarial science and is known as Dufresne law in this context. See \cite{ChamLetac,Dufresne2010} and references therein. Let us also mention that $G$-function popped up recently in the random matrix theory as correlation kernel of a determinantal point process that governs singular values of products of~$M$ rectangular random matrices with independent complex Gaussian entries~\cite{AIK}. Curiously enough, none of these authors cited N{\o}rlund's work. Let us also mention that in our recent paper~\cite{KPCMFT} conditions are given under which $G_{p,p}^{p,0}(e^{-x})$ is inf\/initely divisible distribution on~$[0,\infty)$.
\end{Remark}

\subsection[Expansion of $G^{2,p}_{p,p}$ in the neighborhood of unity]{Expansion of $\boldsymbol{G^{2,p}_{p,p}}$ in the neighborhood of unity}

Formula \cite[(5.7)]{Norlund} shows that N{\o}rlund's function $y_{1,2}(x)$ def\/ined by \cite[formula~(5.2)]{Norlund} is expressed in terms of Meijer's $G$-function as follows
\begin{gather}\label{eq:Norl-y12}
y_{1,2}(x)=G^{2,p}_{p,p} \left( x\,\vline\,\begin{matrix}1-\alpha_1,\ldots,1-\alpha_p
\\ \gamma_1,\gamma_2,\gamma_3,\ldots,\gamma_p\end{matrix} \right).
\end{gather}
To def\/ine $y_{k,s}(x)$ the roles of $\gamma_1$, $\gamma_2$ are exchanged with those of $\gamma_k$, $\gamma_s$. N{\o}rlund found
several expansions of the functions $y_{k,s}(x)$ in hypergeometric polynomials which we cite below. Changing N{\o}rlund's notation to ours according to the rule $x\mapsto{z}$, $\gamma_i\mapsto{a_i}$, $\alpha_i\mapsto{1-b_i}$, formula \cite[(5.3)]{Norlund} takes the form
\begin{gather}\sin\pi(a_s-a_i)
G^{2,p}_{p,p} \left( z\,\vline\,\begin{matrix}\b\\a_s,a_i,\a_{[s,i]}\end{matrix} \right)+
\sin\pi(a_i-a_k)G^{2,p}_{p,p} \left( z\, \vline\, \begin{matrix}\b\\a_i,a_k,\a_{[i,k]}\end{matrix} \right)
\nonumber\\
\hphantom{\sin\pi(a_s-a_i) G^{2,p}_{p,p} \left( z\,\vline\,\begin{matrix}\b\\a_s,a_i,\a_{[s,i]}\end{matrix} \right)}{}
+\sin\pi(a_k-a_s)G^{2,p}_{p,p} \left( z\, \vline\,\begin{matrix}\b\\a_k,a_s,\a_{[k,s]}\end{matrix} \right)=0\label{eq:G2pppNorlund5.3}
\end{gather}
for any distinct values of $s,i,k\in\{1,\ldots,p\}$, where $\a_{[k,s]}$ denotes the vector $\a$
with elements $a_k$ and $a_s$ removed. Expansions \cite[(5.20), (5.22), (5.23), (5.31)]{Norlund} written in terms of
$G$-function take the form
\begin{gather}
G^{2,p}_{p,p} \left( z\,\vline\,\begin{matrix}\b
\\a_k,a_s,\a_{[k,s]}\end{matrix} \right)=\frac{z^{a_k}\Gamma(1-\b+a_k)\Gamma(1-b_1+a_s)\Gamma(1-b_2+a_s)}{\Gamma(1-\a_{[k,s]}+a_k)\Gamma(2+a_k+a_s-b_1-b_2)}
\notag\\
\quad{}\times\sum_{n=0}^{\infty}\frac{(1-b_1+a_k)_n(1-b_2+a_k)_n}{(2+a_k+a_s-b_1-b_2)_n}\,
{}_{p-1}F_{p-2}\left(\begin{matrix}-n,1-\b_{[1,2]}+a_k\\ 1-\a_{[k,s]}+a_k\end{matrix}\,\vline\, z\right)\label{eq:Norlund5.20}
\\
{}=\frac{z^{a_k}\Gamma(1-a_k+a_s)\Gamma(1-\b+a_k)}{\Gamma(1-\a_{[k,s]}+a_k)}
\sum_{n=0}^{\infty}\frac{(1-b_1+a_k)_n}{(1-b_1+a_s)_{n+1}}
\, {}_{p}F_{p-1}\left(\begin{matrix}-n,1-\b_{[1]}+a_k\\1,
1-\a_{[k,s]}+a_k\end{matrix}\,\vline\,z\right)\!\!\!\!\!\label{eq:Norlund5.22}
\\
{}=\frac{z^{a_k}\Gamma(1-b_1+a_s)\Gamma(1-\b_{[1]}+a_k)}{\Gamma(1-\a_{[k,s]}+a_k)}
\sum_{n=0}^{\infty}\frac{(1-a_s+a_k)_n}{n!(1-b_1+a_k+n)}\nonumber\\
\quad{}\times
{}_{p}F_{p-1}\left(\begin{matrix}-n,1-\b_{[1]}+a_k\\ 1-\a_{[k]}+a_k\end{matrix}\,\vline\,z\right)\label{eq:Norlund5.23}
\\
{} =\frac{z^{a_k}\pi(a_k-a_s)\Gamma(1-\b+a_k)}{\sin(\pi(a_k-a_s))\Gamma(1-\a_{[k]}+a_k)}
\sum_{n=0}^{\infty}\frac{(1-a_s+a_k)_n}{(n+1)!}\,
{}_{p+1}F_{p}\left(\begin{matrix}-n,1-\b+a_k\\ 1,
1-\a_{[k]}+a_k\end{matrix}\, \vline\, z\right).\label{eq:Norlund5.31}
\end{gather}
The series in (\ref{eq:Norlund5.20}) converges in the disk $|z-1|<1$ if $\Re(1-\b_{[1,2]}+a_s)>0$, the series in~(\ref{eq:Norlund5.22}),~(\ref{eq:Norlund5.23}) converge in the same disk if $\Re(1-\b_{[1]}+a_s)>0$ and, f\/inally,
(\ref{eq:Norlund5.31}) converges in $|z-1|<1$ if $\Re(1-\b+a_s)>0$. In all cases, it is also required that none of
the gamma functions in the numerator had poles. Further, N{\o}rlund found two expansions of his function~$y_{k,s}(x)$ in
powers of $1-z$, which we will need below. Written in our notation, formulas \cite[(5.35), (5.36)]{Norlund} read
\begin{gather}\label{eq:G2pppNorlund}
G^{2,p}_{p,p} \left( z\, \vline\, \begin{matrix}\b\\ a_k,a_s,\a_{[k,s]}\end{matrix} \right)
=z^{a_s}\sum_{n=0}^{\infty}D_n^{[k,s]}(1-z)^n,
\end{gather}
where the coef\/f\/icients $D_n^{[k,s]}$ are given by
\begin{gather}
\notag
D_n^{[k,s]}=\frac{\Gamma(1-\b+a_k)\Gamma(1-b_1+a_s+n)\Gamma(1-b_2+a_s+n)}{\Gamma(1-\a_{[k,s]}+a_k)\Gamma(2+a_k+a_s-b_1-b_2+n)n!}
\\
\hphantom{D_n^{[k,s]}=}{}
\times\sum_{j=0}^{\infty}\frac{(1-b_1+a_k)_j(1-b_2+a_k)_j}{j!(2+a_k+a_s-b_1-b_2+n)_j}\,
{}_{p-1}F_{p-2}\left( \begin{matrix}-j,1-\b_{[1,2]}+a_k\\1-\a_{[k,s]}+a_k\end{matrix} \right)\label{eq:Norlund5.35}
\\
\hphantom{D_n^{[k,s]}}{}
=\frac{\Gamma(1-\b+a_k)\Gamma(1-a_k+a_s+n)}{\Gamma(1-\a_{[k,s]}+a_k)n!}\sum_{j=0}^{\infty}\frac{(1-b_1+a_k)_j}{(1-b_1+a_s+n)_{j+1}}\nonumber\\
\hphantom{D_n^{[k,s]}=}{}\times
 {}_{p}F_{p-1} \left( \begin{matrix}-j,1-\b_{[1]}+a_k\\ 1,1-\a_{[k,s]}+a_k\end{matrix} \right).\label{eq:Norlund5.36}
\end{gather}
Here and below ${}_pF_{p-1}$ without an argument is understood as ${}_pF_{p-1}(1)$. The series (\ref{eq:G2pppNorlund}) converges in $|z-1|<1$ if $\Re(1-\b_{[1,2]}+a_s)>0$. The same condition suf\/f\/ices for convergence of~(\ref{eq:Norlund5.35}), while (\ref{eq:Norlund5.36}) converges if $\Re(1-\b_{[1]}+a_s)>0$. Uniqueness of power series coef\/f\/icients implies equality of the coef\/f\/icients in~(\ref{eq:Norlund5.35}) and~(\ref{eq:Norlund5.36}). For $p=2$ this equality is nothing but Gauss formula for~${}_2F_1(1)$. For $p=3$ we get after some
simplif\/ications and renaming variables
\begin{gather*}{}_3F_2 \left( \begin{matrix}\alpha_1,\alpha_2,\alpha_3\\\beta_1,\beta_2\end{matrix} \right)
 = \frac{\Gamma(\beta_2-\alpha_1-\alpha_2+1)\Gamma(\beta_2)}{\Gamma(\beta_2-\alpha_1)\Gamma(\beta_2-\alpha_2)}
\sum_{j=0}^{\infty}\frac{(\alpha_1)_j}{(\beta_2-\alpha_2)_{j+1}}\,
{}_3F_2 \left( \begin{matrix}-j,\alpha_2,\beta_1-\alpha_3\\\beta_1,1\end{matrix} \right).
\end{gather*}

\subsection[Connection to B\"{u}hring expansion of ${}_pF_{p-1}$]{Connection to B\"{u}hring expansion of $\boldsymbol{{}_pF_{p-1}}$}

In his 1992 paper B\"{u}hring found the representation \cite[Theorem~2]{Buehring92}
\begin{gather}\label{eq:Buehring1}
\frac{\Gamma(\al)}{\Gamma(\be)}{}_{p}F_{p-1}\left( \begin{matrix}\al\\ \be\end{matrix};z \right)
=(1-z)^{\nu}\sum_{n=0}^{\infty}f_p(n|\al,\be)(1-z)^n+\sum_{n=0}^{\infty}h_p(n|\al,\be)(1-z)^n,
\end{gather}
where $\nu=\sum\limits_{k=1}^{p-1}\beta_k-\sum\limits_{k=1}^{p}\alpha_k$. He also derived explicit formulas for the coef\/f\/icients $f_p(n|\al,\be)$ and $h_p(n|\al,\be)$ to be given below. On setting $\al=1-\b+a_s$, $\be=1-\a_{[s]}+a_s$, $\nu=\psi_p-1$ and denoting
\begin{gather*}
f_p^s(n)\equiv f_p(n|1-\b+a_s,1-\a_{[s]}+a_s),
\\
h_p^s(n)\equiv h_p(n|1-\b+a_s,1-\a_{[s]}+a_s),
\end{gather*}
B\"{u}hring's formula takes the form
\begin{gather}
\frac{\Gamma(1-\b+a_s)}{\Gamma(1-\a_{[s]}+a_s)}\, {}_pF_{p-1}\left(\begin{matrix}1-\b+a_s\\ 1-\a_{[s]}+a_s\end{matrix};z\right)\nonumber\\
\qquad{}=(1-z)^{\psi_p-1}\sum_{n=0}^{\infty}f_p^s(n)(1-z)^n+\sum_{n=0}^{\infty}h_p^s(n)(1-z)^n.\label{eq:Buehring13}
\end{gather}
Expansion of the form (\ref{eq:Buehring1}) is unique as long as $\nu$ is not an integer. Hence, in view of~(\ref{eq:Gp0pp}) and~(\ref{eq:Norlund5.35}), identity~(\ref{eq:Buehring13}) is equivalent to formula~(\ref{eq:Norlund5.40}). From~(\ref{eq:Norlund5.40}) we conclude that
\begin{gather}\label{eq:singularconnection}
(1-z)^{\psi_p-1}\sum_{n=0}^{\infty}f_p^s(n)(1-z)^n=\frac{\pi z^{-a_s}}{\sin(\pi\psi_p)}
G^{p,0}_{p,p} \left( z\, \vline\, \begin{matrix}\b
\\\a\end{matrix} \right) \qquad \text{and}\\
\label{eq:regularconnection}
\sum_{n=0}^{\infty}h_p^s(n)(1-z)^n=\frac{-z^{-a_s}}{\pi\sin(\pi\psi_p)}\sum_{\substack{k=1\\k\ne{s}}}^{p} \frac{\sin(\pi(\b-a_k))}{\sin(\pi(\a_{[k,s]}-a_k))}
G^{2,p}_{p,p} \left( z\, \vline\,\begin{matrix}\b-a_s\\a_k,a_s,\a_{[k,s]}\end{matrix} \right)
\end{gather}
for each $s\in\{1,\ldots,p\}$. Comparing (\ref{eq:singularconnection}) with (\ref{eq:Norl-xi}) we arrive at
\begin{gather}\label{eq:Buehr-Norl-eq}
f^s_p(n)=\frac{\Gamma(1-\psi_p)}{(\psi_p)_n}g^s_p(n),
\end{gather}
where Euler's ref\/lection formula has been used. Thus, B\"{u}hring
coef\/f\/icients $f_p(n|\al,\be)$ (denoted by $g_n(s)$ in~\cite{Buehring92}) are essentially the same as N{\o}rlund coef\/f\/icients $g^s_p(n)$ (denoted by $c^{(s)}_{n,p}$ in~\cite{Norlund}). Explicit representation for the coef\/f\/icients $f_p(n|\al,\be)$ found in~\cite[(2.9), (2.16)]{Buehring92} after some renaming of variables and setting $s=p$ can be put into the form
\begin{gather*}
f_p^p(n) = \frac{\Gamma(1-\psi_p)}{(\psi_p)_n}
 \sum^{n}_{j_{p-2}=0}\frac{(-1)^{j_{p-2}}(\psi_p+a_p-b_{p-1}+j_{p-2})_{n-j_{p-2}}}{(n-j_{p-2})!}\\
\hphantom{f_p^p(n) =}{}
\times\sum^{j_{p-2}}_{j_{p-3}=0} \cdots \sum^{j_3}_{j_{2}=0}\sum^{j_2}_{j_{1}=0}
\prod\limits_{m=1}^{p-1}\frac{(\psi_m+j_{m-1})_{j_{m}-j_{m-1}}}{(j_{m}-j_{m-1})!}
\prod\limits_{m=1}^{p-2}(b_m-a_{m+1})_{j_{m}-j_{m-1}}\\
\hphantom{f_p^p(n)}{}
= \frac{\Gamma(1-\psi_p)}{(\psi_p)_n}
 \sum^{\infty}_{j_{1},j_{2},\ldots,j_{p-2}=0} \frac{(\psi_p+a_p-b_{p-1})_{n}}{(\psi_p + a_p - b_{p-1})_{j_{p-2}}}
\prod\limits_{m=1}^{p-1} \frac{(\psi_m)_{j_m}(-j_{m})_{j_{m-1}}}{(\psi_m)_{j_{m-1}}j_{m}!}\\
\hphantom{f_p^p(n) =}{}\times
 \prod\limits_{m=1}^{p-2} \frac{(b_m-a_{m+1})_{j_{m}}}{(1 - b_m + a_{m+1} - j_{m})_{j_{m-1}}},
\end{gather*}
where $\psi_m=\sum\limits_{i=1}^{m}(b_i-a_i)$ and $j_0=0$, $j_{p-1}=n$.
In view of~(\ref{eq:Norlund-explicit}), formula~(\ref{eq:Buehr-Norl-eq}) then leads to the following (presumably new) transformation for multiple hypergeometric series
\begin{gather*}
\sum^{n}_{j_{p-2}=0} \frac{(\psi_p+a_p-b_{p-1}+j_{p-2})_{n-j_{p-2}}}{(-1)^{j_{p-2}}(n-j_{p-2})!}\\
\qquad\quad{}\times
\sum^{j_{p-2}}_{j_{p-3}=0} \cdots \sum^{j_3}_{j_{2}=0}\sum^{j_2}_{j_{1}=0}
\prod\limits_{m=1}^{p-1} \frac{(\psi_m+j_{m-1})_{j_{m}-j_{m-1}}}{(j_{m}-j_{m-1})!}
\prod\limits_{m=1}^{p-2}(b_m-a_{m+1})_{j_{m}-j_{m-1}}\\
\qquad{} =\sum^{n}_{j_{p-2}=0} \cdots \sum^{j_3}_{j_{2}=0}\sum^{j_2}_{j_{1}=0}
\prod\limits_{m=1}^{p-1}\frac{(\psi_m+j_{m-1})_{j_{m}-j_{m-1}}}{(j_{m}-j_{m-1})!}(b_{m+1}-a_{m})_{j_{m}-j_{m-1}},
\end{gather*}
where $j_0=0$ and $j_{p-1}=n$. For $p=3$ this formula reduces to the identity
\begin{gather*}
{}_3F_2 \left(\begin{matrix}-n,\alpha_1,\alpha_2\\\beta_1,\beta_2\end{matrix}\right)
=\frac{(\beta_2-\alpha_2)_n}{(\beta_2)_n}\, {}_3F_2 \left(\begin{matrix}-n,\beta_1-\alpha_1,\alpha_2\\ \beta_1,1-\beta_2+\alpha_2-n\end{matrix}\right),
\end{gather*}
which is a guise of Sheppard's transformation~\cite[Corollary~3.3.4]{AAR}, also rediscovered by B\"{u}hring~\cite[(4.1)]{Buehring95}. For $p=4$ the corresponding result is given by~\cite[(4.2)]{Buehring95}
\begin{gather*}
\sum\limits_{k=0}^{n}\frac{(-k)_n(\alpha_1)_k(\alpha_2)_k}{(\beta_1)_k(\beta_2)_kk!}\,
{}_3F_2 \left(\begin{matrix}-k,\gamma_1,\gamma_2\\\alpha_1,\alpha_2\end{matrix}\right)\\
\qquad{}=\frac{(\beta_2-\alpha_2)_n}{(\beta_2)_n}\sum\limits_{k=0}^{n}\frac{(-k)_n(\alpha_2)_k(\beta_1-\alpha_1)_k}{(\beta_1)_k(1+\alpha_2-\beta_2-n)_kk!}\,
{}_3F_2 \left(\begin{matrix}-k,\gamma_1,\gamma_2\\\alpha_2,1+\alpha_1-\beta_1-k\end{matrix}\right).
\end{gather*}

In his paper \cite{Buehring92} B\"{u}hring mentioned that the structure of (\ref{eq:Buehring1}) ``was already given in the classic paper by N{\o}rlund \cite{Norlund}, but the coef\/f\/icients were not all known''. In his subsequent joint work~\cite{BS} with Srivastava is it said that the coef\/f\/icients $f_p^s(n)$ can be computed using N{\o}rlund's recurrence, ``but it is desirable to get an explicit representation'' which is indeed derived in~\cite{BS} in terms of some limit relations. Explicit formulas for the coef\/f\/icients $h_p^s(n)$
have been f\/irst found in~\cite{Buehring92}. Adopted to our notation B\"{u}hring's formulas \cite[(2.9), (2.16)]{Buehring92}, \cite[(2.13), (2.15)]{BS} read
\begin{gather}
 h_p^p(n)=
-\frac{\Gamma(\psi_p)\Gamma(1-b_p+a_p+n)\Gamma(1-b_{p-1}+a_p+n)}{(1-\psi_p)_{n+1}\Gamma(\psi_{p-1})
\Gamma(\psi_p-b_{p-1}+a_p)n!}\nonumber\\
\hphantom{h_p^p(n)=}{} \times
\sum\limits_{j_{p-2}=0}^{\infty} \frac{(\psi_p-n-1)_{j_{p-2}}}{(\psi_{p-1})_{j_{p-2}}(\psi_p-b_{p-1}+a_p)_{j_{p-2}}}\nonumber\\
\hphantom{h_p^p(n)=}{} \times
 \sum\limits_{j_{p-3}=0}^{j_{p-2}}\cdots\sum\limits_{j_{1}=0}^{j_{2}}
\prod\limits_{m=1}^{p-2}\frac{(\psi_m+j_{m-1})_{j_{m}-j_{m-1}}}{(j_{m}-j_{m-1})!}(b_m-a_{m+1})_{j_{m}-j_{m-1}},\label{eq:hp-explicit}
\end{gather}
where, as before, $\psi_m=\sum\limits_{i=1}^{m}(b_i-a_i)$ and $j_0=0$. Conditions for convergence for the outer series above have also
been found in~\cite{Buehring92} and~\cite{BS} and are given by
\begin{gather*}
\Re(1-b_i+a_p+n)>0 \qquad \text{for} \quad i=1,2,\ldots,p-2.
\end{gather*}
B\"{u}hring and Srivastava found expressions for the coef\/f\/icients $f_p(n|\al,\be)$, $h_p(n|\al,\be)$ as limits of hypergeometric polynomials \cite[(3.13)]{BS}. They computed this limit for
$p=3,4,5$ yielding \cite[(3.15), (3.17)]{BS}
\begin{gather}
h^s_3(n)\equiv h_3(n|1-\b+a_s,1-\a_{[s]}+a_s)\nonumber\\
\hphantom{h^s_3(n)}{} =-\frac{\Gamma(\psi_3)\Gamma(1-b_2+a_s+n)\Gamma(1-b_3+a_s+n)}{(1-\psi_3)_{n+1}n!\Gamma(\psi_3-b_2+a_s)\Gamma(\psi_3-b_3+a_s)}\,
{}_{3}F_{2} \left( \begin{matrix}\psi_3-1-n,b_1-\a_{[s]}\\ \psi_3-\b_{[1]}+a_s\end{matrix} \right),\!\!\!\!\label{eq:partial}\\
h^s_4(n)=-\frac{\Gamma(\psi_4)\Gamma(1-b_3+a_s+n)\Gamma(1-b_4+a_s+n)}{(1-\psi_4)_{n+1}n!\Gamma(\psi_4-b_3+a_s)\Gamma(\psi_4-b_4+a_s)}
\nonumber\\
\hphantom{h^s_4(n)=}{}
\times \sum\limits_{k=0}^{\infty}(\psi_4-1-n)_k \frac{(b_1+b_2-a_{i_1}-a_{i_2})_k(b_1+b_2-a_{i_1}-a_{i_3})_k}{(\psi_4-b_3+a_s)_k(\psi_4-b_4+a_s)_k}\nonumber\\
\hphantom{h^s_4(n)=}{}\times
{}_{3}F_{2} \left( \begin{matrix}-k,b_1-a_{i_1},b_2-a_{i_1}\\b_1+b_2-a_{i_1}-a_{i_2},b_1+b_2-a_{i_1}-a_{i_3}\end{matrix} \right),\nonumber
\end{gather}
where $\{a_{i_1},a_{i_2},a_{i_3}\}=\a_{[s]}$ (i.e., if $s=1$ then
$\{a_{i_1},a_{i_2},a_{i_3}\}=\{a_{2},a_{3},a_{4}\}$, if $s=2$ then
$\{a_{i_1},a_{i_2},a_{i_3}\}=\{a_{1},a_{3},a_{4}\}$, etc.). Expression for $p=5$ is quite cumbersome and can be found in~\cite[(3.19)]{BS}. In spite of their non-symmetric appearance, both formulas are invariant with respect to permutation of the elements of~$\b$.

Let us now cite the formulas for the coef\/f\/icients $g^s_p(n)$ for $p=2,3,4$. For $p=2$ the corresponding formulas can be read of\/f formula~(\ref{eq:2F1connection}).
For both $p=2$ and $p=3$ expressions for $g^s_p(n)$ have been found by N{\o}rlund, see~\cite[(2.10)]{Norlund}. They are
\begin{gather*}
g^{s}_{2}(n)=\frac{(b_1-a_{3-s})_{n}(b_2-a_{3-s})_{n}}{n!}, \qquad s=1,2,
\end{gather*}
and
\begin{gather*}
g^{s}_{3}(n)=\frac{(\psi_3-b_2+a_s)_n(\psi_3-b_3+a_s)_n}{n!}\,
{}_3F_2\left( \begin{matrix}-n,b_1-a_{i_1},b_1-a_{i_2}\\ \psi_3-b_2+a_s,\psi_3-b_3+a_s\end{matrix} \right), \qquad s=1,2,3,
\end{gather*}
where $\{a_{i_1},a_{i_2}\}=\a_{[s]}$. Notwithstanding the non-symmetric appearance, the last formula is symmetric with
respect to the elements of~$\b$. For $p=4$ we convert the expression for $f_p^s(n)$ calculated in \cite[(3.17)]{BS} into expressions for $g^s_p(n)$ using~(\ref{eq:Buehr-Norl-eq}) and the necessary renaming of parameters. This yields
\begin{gather*}
g_{4}^{s}(n)=\frac{(\psi_4-b_3+a_s)_n(\psi_4-b_4+a_s)_n}{n!}\\
\hphantom{g_{4}^{s}(n)=}{}\times
\sum\limits_{k=0}^{n}\frac{(-n)_k(b_1+b_2-a_{i_1}-a_{i_2})_k(b_1+b_2-a_{i_1}-a_{i_3})_k}{(\psi_4-b_3+a_s)_k(\psi_4-b_4+a_s)_k}
\\
\hphantom{g_{4}^{s}(n)=}{}\times
{}_{3}F_{2} \left( \begin{matrix}-k,b_1-a_{i_1},b_2-a_{i_1}\\b_1+b_2-a_{i_1}-a_{i_2},b_1+b_2-a_{i_1}-a_{i_3}\end{matrix} \right),
\end{gather*}
where $\{a_{i_1},a_{i_2},a_{i_3}\}=\a_{[s]}$. This formula is also invariant with respect to permutation of the elements of~$\b$.

\section{Main results}\label{section3}

Having made these preparations, we are ready to formulate our main results. First, we give explicit formulas for the N{\o}rlund's coef\/f\/icients~$g^s_p(1)$, $g^s_p(2)$, $g^s_p(3)$ with arbitrary $p$. Note that formula (\ref{eq:Norlund-explicit}) contains $p-2$ summations even for small $n$. Combined with (\ref{eq:Norl-xi}) and $g^{p}_{p}(0)=1$ our formulas essentially provide f\/irst four terms in asymptotic expansion of $G^{p,0}_{p,p}$ as $z\to1$. Then we derive a new way to compute the coef\/f\/icients $g^s_p(n)$ for all $n$ in terms of generalized Bernoulli polynomials in Theorem~\ref{th:norlundcoeff1}. Theorems~\ref{th:identitiesNorl} and~\ref{th:identitiesBuehr} contain new identities for the coef\/f\/icients $g^s_p(n)$, $h_p^s(n)$ and $D_n^{[k,s]}$ def\/ined by expansions~(\ref{eq:Norl-xi}),~(\ref{eq:G2pppNorlund}) and~(\ref{eq:Buehring13}), respectively. For small values of $n$ these identities lead to new and known relations involving hypergeometric and trigonometric functions, in particular, to identity~(\ref{eq:Ptol}).
\begin{Theorem}\label{th:g123}
The first four coefficients of the expansion \eqref{eq:Norl-xi} are given by
\begin{gather}\label{eq:gsp1}
g^{p}_{p}(0)=1,\qquad g^{p}_{p}(1)=\sum_{m=1}^{p-1}(b_{m+1}-a_m)\psi_m,\\
\label{eq:gsp2}
g^{p}_{p}(2)=\frac{1}{2}\sum_{m=1}^{p-1}(b_{m+1}-a_m)_2(\psi_m)_2+\sum_{k=2}^{p-1}(b_{k+1}-a_{k})(\psi_{k}+1)\sum_{m=1}^{k-1}(b_{m+1}-a_m)\psi_m
\end{gather}
and
\begin{gather}
g^{p}_{p}(3) = \frac{1}{6}\sum_{m=1}^{p-1}(b_{m+1}-a_m)_3(\psi_m)_3+\frac{1}{2}\sum_{k=2}^{p-1}(b_{k+1}-a_{k})(\psi_{k}+2)
\sum_{m=1}^{k-1}(b_{m+1}-a_m)_2(\psi_m)_2\nonumber\\
\hphantom{g^{p}_{p}(3) =}{}
+\frac{1}{2}\sum_{k=2}^{p-1}(\psi_{k}+1)_2
(b_{k+1}-a_{k})_2\sum_{m=1}^{k-1}(b_{m+1}-a_m)\psi_m\nonumber\\
\hphantom{g^{p}_{p}(3) =}{}
+\sum_{n=3}^{p-1}(b_{n+1}-a_n)(\psi_n+2)\sum_{k=2}^{n-1}(b_{k+1}-a_{k})(\psi_{k}+1)\sum_{m=1}^{k-1}(b_{m+1}-a_m)\psi_m,\label{eq:gsp3}
\end{gather}
where, as before, $\psi_m=\sum\limits_{j=1}^{m}(b_j-a_j)$.
\end{Theorem}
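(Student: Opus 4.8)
The plan is to compute each coefficient directly from N{\o}rlund's explicit formula~(\ref{eq:Norlund-explicit}), which writes $g^p_p(n)$ as a sum over non-decreasing multi-indices $0=j_0\le j_1\le\cdots\le j_{p-2}\le j_{p-1}=n$. The key observation is that the summand depends only on the consecutive differences $\delta_m:=j_m-j_{m-1}\ge0$, which are non-negative integers with $\sum_{m=1}^{p-1}\delta_m=n$. For the small values $n=0,1,2,3$ only finitely many such compositions occur, and I would enumerate them one at a time.

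First I would record the effect of a single jump: whenever $\delta_m=d$ and all differences at smaller indices have been fixed, the corresponding factor in~(\ref{eq:Norlund-explicit}) equals $\frac{(\psi_m+j_{m-1})_d}{d!}(b_{m+1}-a_m)_d$, where $j_{m-1}$ is exactly the sum of the differences placed strictly before index $m$; every factor with $\delta_m=0$ equals $1$. Thus each composition of $n$ into the parts $\delta_m$, positioned at an increasing sequence of indices, contributes a single product, and the argument shift inside each Pochhammer symbol is simply the partial sum of the earlier parts.

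For $n=0$ the only composition is empty, giving $g^p_p(0)=1$. For $n=1$ the single part $1$ sits at some index $m$ with $j_{m-1}=0$, yielding $\psi_m(b_{m+1}-a_m)$ and hence~(\ref{eq:gsp1}). For $n=2$ there are two composition types: a single part $2$ at index $m$ (with $j_{m-1}=0$, contributing $\frac{1}{2}(\psi_m)_2(b_{m+1}-a_m)_2$) and two parts $1$ at indices $m<k$ (with $j_{m-1}=0$, $j_{k-1}=1$, contributing $\psi_m(b_{m+1}-a_m)(\psi_k+1)(b_{k+1}-a_k)$); summing over all positions reproduces~(\ref{eq:gsp2}). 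For $n=3$ I would split into the four composition types $(3)$, $(2,1)$, $(1,2)$ and $(1,1,1)$, reading off the shifts $j_{m-1}\in\{0,1,2\}$ from the partial sums; these produce, respectively, the four nested sums in~(\ref{eq:gsp3}).

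The only genuine bookkeeping obstacle is correctly tracking the shift $j_{m-1}$ inside each Pochhammer symbol, since it differs between the orderings $(2,1)$ and $(1,2)$ and is exactly what distinguishes the factors $(\psi_k+2)$, $(\psi_k+1)_2$ and $(\psi_n+2)$ appearing in the second, third and fourth lines of~(\ref{eq:gsp3}). Once the compositions are organized by the positions of their jumps and the partial-sum shifts are inserted, each term matches the claimed expression verbatim; a cross-check against the recurrence~(\ref{Norl-rec-p}) in $p$ is available but unnecessary.
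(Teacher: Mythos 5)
Your proposal is correct and follows essentially the same route as the paper: both compute $g^p_p(n)$ for $n\le 3$ directly from N{\o}rlund's explicit formula~(\ref{eq:Norlund-explicit}) by grouping the admissible index sets according to the pattern of jumps $j_m-j_{m-1}$ (the paper phrases this as partitioning the Young diagrams fitting an $n\times(p-2)$ box into the types $(3)$, $(2,1)$, $(1,2)$, $(1,1,1)$, which is exactly your composition-by-consecutive-differences bookkeeping). Your tracking of the partial-sum shifts $j_{m-1}$ inside the Pochhammer symbols matches the paper's four groups term by term.
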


\begin{proof} We note that for $n=1$ summation in (\ref{eq:Norlund-explicit}) is over the index sets of the form
\begin{gather*}
\{j_0,j_1,\ldots_,j_{p-1}\}=\{0,\ldots,0,1,\ldots,1\},
\end{gather*}
where the number of ones changes from $1$ to $p-1$. In view of this observation rearrangement of the formula~(\ref{eq:Norlund-explicit}) yields~(\ref{eq:gsp1}).
Next, we prove~(\ref{eq:gsp3}). Summation in~(\ref{eq:Norlund-explicit}) is over all Young diagrams that f\/it $3\times(p-2)$ box. We break all possible diagrams in four disjoint groups as follows (by def\/inition $j_0=0$, $j_{p-1}=3$):
\begin{alignat*}{3}
&(1)\quad && j_0=j_1=\cdots={j_{m-1}}=0,\qquad j_m=\cdots=j_{p-1}=3,\qquad m\in\{1,2,\ldots,p-1\},&\\
&(2) \quad && j_0=j_1=\cdots={j_{m-1}}=0, \qquad j_m=\cdots=j_{k-1}=2,\qquad j_{k}=\cdots= j_{p-1}=3,& \\
&&& k\in\{2,\ldots,p-1\}, \qquad m\in\{1,\ldots,k-1\},\qquad m<k,&\\
&(3)\quad && j_0=j_1=\cdots={j_{m-1}}=0,\qquad j_m=\cdots=j_{k-1}=1,\qquad j_{k}=\cdots= j_{p-1}=3,&\\
&&& k\in\{2,\ldots,p-1\},\qquad m\in\{1,\ldots,k-1\},\qquad m<k,& \\
&(4) \quad && j_0=\cdots={j_{m-1}}=0, \qquad j_m=\cdots=j_{k-1}=1,\qquad j_{k}=\cdots=j_{n-1}=2,&\\
&&& j_{n}=\cdots=j_{p-1}=3,\qquad n\in\{3,\ldots,p-1\},\qquad k\in\{2,\ldots,n-1\},&\\
&&& m\in\{1,\ldots,k-1\}, \qquad m<k<n.&
\end{alignat*}
Summation over the f\/irst type of diagrams leads to the f\/irst term~(\ref{eq:gsp3}). Similarly, summation over the $i$-th type of diagrams leads to the $i$-th term~(\ref{eq:gsp3}) for $i=2,3,4$. Analogous considerations lead to~(\ref{eq:gsp2}).
\end{proof}

\begin{Remark}
Exchanging the roles of $a_p$ and $a_s$ formulas (\ref{eq:gsp1}), (\ref{eq:gsp2}) and (\ref{eq:gsp3}) lead to expressions for $g^s_p(1)$, $g^s_p(2)$, $g^s_p(3)$. Since each coef\/f\/icient $g^s_p(n)$ is a symmetric polynomial in the elements of~$\a_{[s]}$ and~$\b$, such expressions can be expanded in terms of some basis of symmetric polynomials. In particular, denoting by $e_k(x_1,\ldots,x_m)$ the $k$-th elementary symmetric polynomial of $x_1,\ldots,x_m$, we get
\begin{gather*}
g^s_p(1)=e_2(\b)-e_2(\a_{[s]})+e_1(\a_{[s]})(e_1(\a_{[s]})-e_1(\b))
\end{gather*}
and (with the help of \textit{Mathematica})
\begin{gather*}
g^s_p(2)=\frac{1}{2}e_1(\a_{[s]})^4-e_1(\a_{[s]})^3+\frac{1}{2}e_1(\a_{[s]})^2+\frac{3}{2}e_1(\a_{[s]})e_2(\a_{[s]})+\frac{1}{2}e_2(\a_{[s]})^2-\frac{1}{2}e_2(\a_{[s]})
\\
\hphantom{g^s_p(2)=}{}
-e_1(\a_{[s]})^2e_2(\a_{[s]})-\frac{1}{2}e_3(\a_{[s]})+\frac{1}{2}e_2(\b)^2+\frac{1}{2}e_2(\b)+\frac{1}{2}e_3(\b)-\frac{1}{2}e_1(\a_{[s]})e_1(\b)\\
\hphantom{g^s_p(2)=}{}
+\frac{3}{2}e_1(\a_{[s]})^2e_1(\b)-e_1(\a_{[s]})^3e_1(\b)
-\frac{1}{2}e_1(\a_{[s]})e_1(\b)^2+\frac{1}{2}e_1(\a_{[s]})^2e_1(\b)^2\\
\hphantom{g^s_p(2)=}{}
+e_1(\a_{[s]})^2e_2(\b)+\frac{1}{2}e_1(\b)e_2(\b)-e_1(\a_{[s]})e_1(\b)e_2(\b)-e_2(\a_{[s]})e_2(\b)-e_1(\b)e_2(\a_{[s]})\\
\hphantom{g^s_p(2)=}{}
+e_1(\a_{[s]})e_2(\a_{[s]})e_1(\b)-e_1(\a_{[s]})e_2(\b).
\end{gather*}
\end{Remark}

Next theorem gives a presumably new expression for N{\o}rlund's coef\/f\/icients $g_p^k(n)$ in terms of the generalized Bernoulli polynomials. Let us start by recalling that the Bernoulli--N{\o}rlund (or the gene\-ra\-li\-zed Bernoulli) polynomial $\Be^{(\sigma)}_{k}(x)$ is def\/ined by the generating function \cite[(1)]{Norlund61}:
\begin{gather*}
\frac{t^{\sigma}e^{xt}}{(e^t-1)^{\sigma}}=\sum\limits_{k=0}^{\infty}\Be^{(\sigma)}_{k}(x)\frac{t^k}{k!}.
\end{gather*}
In particular, $\Be^{(1)}_{k}(x)=\Be_{k}(x)$ is the classical Bernoulli polynomial.

\begin{Theorem}\label{th:norlundcoeff1}
Coefficients $g_p^k(n)$, defined in \eqref{eq:Norl-xi}, are given by any of the following formulas
\begin{gather}
g_p^k(n)=\sum\limits_{r=0}^{n}\frac{(-1)^{n-r}(r+1)_{n-r}}{(n-r)!}\tilde{l}_r\Be^{(n+1)}_{n-r}(2-a_k-\psi_p)\nonumber\\
\hphantom{g_p^k(n)}{}
=\sum\limits_{r=0}^n\frac{(-1)^{n-r} (\psi_p+r)_{n-r}}{(n-r)!}l_r\Be^{(n+\psi_p)}_{n-r}(1-a_k).\label{eq:Norlund-Bernoulli}
\end{gather}
Here $\tilde{l}_0=1$ and $\tilde{l}_r$, $r\ge1$, are found from the recurrence
\begin{gather}\label{eq:ñr}
\tilde{l}_r=\frac{1}{r}\sum\limits_{m=1}^r\widetilde{q}_m \tilde{l}_{r-m},
\end{gather}
where
\begin{gather*}
\widetilde{q}_m= \frac{(-1)^{m+1}}{m+1}
\left[\Be_{m+1}(a_k+\psi_p-1)-{\Be}_{m+1}(a_k)+\sum\limits_{j=1}^p{(\Be_{m+1}(a_j)}
-{\Be_{m+1}(b_j))}\right].
\end{gather*}
Similarly, the coefficients $l_r$ satisfy the recurrence relation
\begin{gather}\label{eq:lr}
l_r=\frac{1}{r}\sum\limits_{m=1}^r q_m l_{r-m}\qquad \text{with}\quad l_0=1
\end{gather}
and
\begin{gather*}
q_m=\frac{(-1)^{m+1}}{m+1} \sum\limits_{j=1}^p\big(\Be_{m+1}(a_j)-\Be_{m+1}(b_j)\big) .
\end{gather*}
\end{Theorem}

\begin{Remark}
It is known \cite[Lemma~1]{Kalinin} that the recurrences (\ref{eq:ñr}) and (\ref{eq:lr}) can be solved to give the following explicit expressions for~$l_r$:
\begin{gather*}
l_r=\sum\limits_{k_1+2k_2+\cdots+rk_r=r}\frac{q_1^{k_1}(q_2/2)^{k_2}\cdots (q_r/r)^{k_r}}{k_1!k_2!\cdots k_r!}
=\sum\limits_{n=1}^{r}\frac{1}{n!}\sum\limits_{k_1+k_2+\cdots+k_n=r}\prod\limits_{i=1}^{n}\frac{q_{k_i}}{k_i}.
\end{gather*}
Similar formula is of course true for $\tilde{l}_r$ once we write $\widetilde{q}_m$ instead of~$q_m$. Moreover, Nair~\cite[Section~8]{Nair} found a~determinantal expression for such solution which in our notation takes the form
\begin{gather*}
l_r=\frac{\det(\Omega_r)}{r!},\qquad \Omega_r=[\omega_{i,j}]_{i,j=1}^{r},\qquad \omega_{i,j} = \begin{cases}q_{i-j+1}(i-1)!/(j-1)!, & i\ge{j},\\
-1, & i=j-1,\\ 0, &i<j-1.
\end{cases}
\end{gather*}
\end{Remark}

\begin{proof} The theorem is a corollary of an expansion of the $H$-function of Fox found in our recent paper~\cite{KPJMAA2016}. Since Meijer's $G$-function is a particular case of Fox's~$H$-function, formula~(\ref{eq:Norlund-Bernoulli}) is a particular case of \cite[Theorem~1]{KPJMAA2016} once we set $p=q$, $\A=\B=(1,\ldots,1)$, $\nu=1$, $\mu=\psi_p$, $\theta=a_k-1$ in that theorem.
\end{proof}

As before, we use the shorthand notation
\begin{gather*}
\sin(\pi(\b-a_k))=\prod_{j=1}^{p}\sin(\pi(b_j-a_k)),\qquad \sin(\pi(\a_{[k]}-a_k))=\prod_{\substack{j=1\\j\ne{k}}}^{p}\sin(\pi(a_j-a_k)),
\end{gather*}
for the products and $[a]_j=a(a-1)\cdots(a-j+1)$ for the falling factorial.

\begin{Theorem}\label{th:identitiesNorl}
For each nonnegative integer $m$ the following identities holds
\begin{gather}\label{eq:identity1}
\sum\limits_{j=0}^{m}\frac{(-1)^{j}}{j!}\sum\limits_{k=1}^{p}[a_k]_jh^k_p(m-j)\frac{\sin(\pi(\b-a_k))}{\sin(\pi(\a_{[k]}-a_k))}=0,
\end{gather}
where the numbers $h^p_p(n)$ are defined by~\eqref{eq:Buehring13} and given explicitly by~\eqref{eq:hp-explicit}, $h^k_p(n)$ is obtained from $h^p_p(n)$ by exchanging the roles of~$a_p$ and~$a_k$; furthermore,
\begin{gather}\label{eq:identity2}
\sum\limits_{j=0}^{m}\frac{(-1)^{j}}{(\psi_p)_{m-j}j!}\left\{[a_s]_jg^s_p(m-j)\sin(\pi\psi_p)
-\sum\limits_{k=1}^{p}[a_k]_j g^k_p(m-j)\frac{\sin(\pi(\b-a_k))}{\sin(\pi(\a_{[k]}-a_k))}\right\}=0,
\end{gather}
where $s\in\{1,\ldots,p\}$ is chosen arbitrarily and the numbers
$g^k_p(n)$ are defined by expansion~\eqref{eq:Norl-xi} and
solve the recurrence~\eqref{eq:Norl-recur} in~$n$ and the
recurrence~\eqref{Norl-rec-p} in~$p$. They are given explicitly by~\eqref{eq:Norlund-explicit}.
\end{Theorem}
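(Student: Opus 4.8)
The plan is to build two expansions of $\pi G^{p,0}_{p,p}(z\,|\,\b;\a)$ valid near $z=1$ and to match their singular and regular Frobenius parts. First I would take the expansion \eqref{eq:Gp0pp} of $G^{p,0}_{p,p}$ around $z=0$ and rewrite the prefactor $\Gamma(\a_{[k]}-a_k)/\Gamma(\b-a_k)$ via Euler's reflection formula $\Gamma(x)\Gamma(1-x)=\pi/\sin(\pi x)$. Accounting for the $p-1$ gamma factors in the numerator and $p$ in the denominator gives
\[
\frac{\Gamma(\a_{[k]}-a_k)}{\Gamma(\b-a_k)}=\frac{1}{\pi}\,\frac{\sin(\pi(\b-a_k))}{\sin(\pi(\a_{[k]}-a_k))}\,\frac{\Gamma(1-\b+a_k)}{\Gamma(1-\a_{[k]}+a_k)},
\]
so that, applying Bühring's expansion \eqref{eq:Buehring13} term by term (with $s$ replaced by $k$),
\[
\pi G^{p,0}_{p,p}=\sum_{k=1}^p\frac{\sin(\pi(\b-a_k))}{\sin(\pi(\a_{[k]}-a_k))}z^{a_k}\Big[(1-z)^{\psi_p-1}\sum_{n}f^k_p(n)(1-z)^n+\sum_{n}h^k_p(n)(1-z)^n\Big].
\]

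The second expansion is supplied by \eqref{eq:singularconnection}, which gives $\pi G^{p,0}_{p,p}=\sin(\pi\psi_p)\,z^{a_s}(1-z)^{\psi_p-1}\sum_n f^s_p(n)(1-z)^n$. The crucial structural point is that this representation is \emph{purely of the singular type}: $G^{p,0}_{p,p}$ is the solution at $z=1$ belonging to the local exponent $\psi_p-1$ and, as \eqref{eq:Norl-xi} shows, carries no admixture of the integer-exponent (analytic) solutions. Working in the generic range $\psi_p\notin\Z$ with the $a_i$ distinct modulo $\Z$, the function $(1-z)^{\psi_p-1}$ times a power series in $1-z$ has only exponents $\psi_p-1+n$, none of them non-negative integers, so it is linearly independent of an ordinary power series in $1-z$. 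I may therefore equate the singular and the regular pieces of the two representations separately.

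Equality of the regular pieces forces
\[
\sum_{k=1}^p\frac{\sin(\pi(\b-a_k))}{\sin(\pi(\a_{[k]}-a_k))}z^{a_k}\sum_n h^k_p(n)(1-z)^n=0 .
\]
Expanding $z^{a_k}=\sum_{j\ge0}\frac{(-1)^j[a_k]_j}{j!}(1-z)^j$ (the binomial series, valid for $|1-z|<1$), forming the Cauchy product, and reading off the coefficient of $(1-z)^m$ yields exactly \eqref{eq:identity1}. Equality of the singular pieces gives, after cancelling $(1-z)^{\psi_p-1}$,
\[
\sin(\pi\psi_p)\,z^{a_s}\sum_n f^s_p(n)(1-z)^n=\sum_{k=1}^p\frac{\sin(\pi(\b-a_k))}{\sin(\pi(\a_{[k]}-a_k))}z^{a_k}\sum_n f^k_p(n)(1-z)^n;
\]
substituting $f^k_p(n)=\Gamma(1-\psi_p)g^k_p(n)/(\psi_p)_n$ from \eqref{eq:Buehr-Norl-eq}, dividing by $\Gamma(1-\psi_p)$, and extracting the coefficient of $(1-z)^m$ with the same binomial expansion produces precisely \eqref{eq:identity2}.

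I expect the main obstacle to be the justification of the clean singular/regular split and the coefficient comparison rather than any single computation: this requires $\psi_p\notin\Z$, so that the Bühring form \eqref{eq:Buehring1} is unique and the two Frobenius solution types do not interfere, and it requires all the series to converge in one common region (the lens $|z|<1$, $|1-z|<1$) so that the rearrangements and the term-by-term multiplication by the binomial series for $z^{a_k}$ are legitimate. Once both identities are verified on this open parameter set they extend to all admissible parameters—in particular to integer $\psi_p$—by analytic continuation, since $g^k_p(n)$ and $h^k_p(n)$ are given by the explicit formulas \eqref{eq:Norlund-explicit} and \eqref{eq:hp-explicit} and each side of the identities is meromorphic in $\a,\b$. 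What remains is routine bookkeeping of the reflection-formula constants and of the Cauchy products.
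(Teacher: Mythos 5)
Your proposal is correct and follows essentially the same route as the paper: both substitute the B\"uhring expansion~(\ref{eq:Buehring13}) into~(\ref{eq:Gp0pp}), convert the gamma ratios to sines by Euler's reflection formula, compare with the N{\o}rlund/singular representation, separate the $(1-z)^{\psi_p-1}$ and integer-exponent parts under the assumption $\psi_p\notin\Z$, and finish by analytic continuation. The only cosmetic difference is that you extract the coefficient of $(1-z)^m$ by a Cauchy product with the binomial series for $z^{a_k}$, whereas the paper phrases the same extraction as an induction on the Taylor derivatives $\chi_{m-j}^{(j)}(1)$ at $z=1$; these are identical computations since $[a_k]_j$ arises precisely from $\frac{d^j}{dz^j}z^{a_k}\big|_{z=1}$.
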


\begin{proof} Assume that the components of the vector $\a$ are distinct modulo~$1$. Substituting expansion~(\ref{eq:Buehring13}) into formula~(\ref{eq:Gp0pp}) and taking account of~(\ref{eq:Buehr-Norl-eq}) we obtain
\begin{gather*}
G^{p,0}_{p,p} \left( z\,\vline\,\begin{matrix}\b\\\a\end{matrix} \right)
=(1-z)^{\psi_p-1}\sum\limits_{k=1}^{p}z^{a_k}\frac{\Gamma(\a_{[k]}-a_k)\Gamma(1-\a_{[k]}+a_k)}{\Gamma(\b-a_k)\Gamma(1-\b+a_k)}
\sum_{n=0}^{\infty}\frac{\Gamma(1-\psi_p)}{(\psi_p)_n}g^k_p(n)(1-z)^n
\\
\hphantom{G^{p,0}_{p,p} \left( z\,\vline\,\begin{matrix}\b\\\a\end{matrix} \right)=}{}
+\sum\limits_{k=1}^{p}z^{a_k}\frac{\Gamma(\a_{[k]}-a_k)\Gamma(1-\a_{[k]}+a_k)}{\Gamma(\b-a_k)\Gamma(1-\b+a_k)}
\sum_{n=0}^{\infty}h_p^k(n)(1-z)^n
\\
\hphantom{G^{p,0}_{p,p} \left( z\,\vline\,\begin{matrix}\b\\\a\end{matrix} \right)}{}
=\frac{1}{\pi}(1-z)^{\psi_p-1}\Gamma(1-\psi_p)\sum_{n=0}^{\infty}\frac{(1-z)^n}{(\psi_p)_n}
\sum\limits_{k=1}^{p}z^{a_k}\frac{\sin(\pi(\b-a_k))}{\sin(\pi(\a_{[k]}-a_k))}g^k_p(n)
\\
\hphantom{G^{p,0}_{p,p} \left( z\,\vline\,\begin{matrix}\b\\\a\end{matrix} \right)=}{}
+\frac{1}{\pi}\sum_{n=0}^{\infty}(1-z)^n\sum\limits_{k=1}^{p}z^{a_k}\frac{\sin(\pi(\b-a_k))}{\sin(\pi(\a_{[k]}-a_k))}h_p^k(n),
\end{gather*}
where we applied Euler's ref\/lection formula
$\Gamma(z)\Gamma(1-z)=\pi/\sin(\pi{z})$. Further, substitute
N{\o}rlund's expansion~(\ref{eq:Norl-xi}) in place of $G$-function
on the left-hand side and rearrange terms to get
\begin{gather*}
(1-z)^{\psi_p-1}\sum\limits_{n=0}^{\infty}(1-z)^n\left\{\frac{z^{a_s}g^{s}_{p}(n)}{\Gamma(\psi_p+n)}
-\frac{\Gamma(1-\psi_p)}{(\psi_p)_n}\sum\limits_{k=1}^{p}\frac{z^{a_k}\sin(\pi(\b-a_k))}{\pi\sin(\pi(\a_{[k]}-a_k))}g^k_p(n)\right\}\\
\qquad{}=\sum_{n=0}^{\infty}(1-z)^n\sum\limits_{k=1}^{p}\frac{z^{a_k}\sin(\pi(\b-a_k))}{\pi\sin(\pi(\a_{[k]}-a_k))}h_p^k(n),
\end{gather*}
where $s\in\{1,\ldots,p\}$ can be chosen arbitrarily. Denote for brevity
\begin{gather*}
\gamma_{n,s}(z)=\frac{z^{a_s}g^{s}_{p}(n)}{\Gamma(\psi_p+n)}
-\frac{\Gamma(1-\psi_p)}{(\psi_p)_n}\sum\limits_{k=1}^{p}\frac{z^{a_k}\sin(\pi(\b-a_k))}{\pi\sin(\pi(\a_{[k]}-a_k))}g^k_p(n),\\
\chi_n(z)=\sum\limits_{k=1}^{p}\frac{z^{a_k}\sin(\pi(\b-a_k))}{\pi\sin(\pi(\a_{[k]}-a_k))}h_p^k(n),
\end{gather*}
so that the above equality reduces to
\begin{gather}\label{eq:gamma-chi}
(1-z)^{\psi_p-1}\sum\limits_{n=0}^{\infty}(1-z)^n\gamma_{n,s}(z)=\sum_{n=0}^{\infty}(1-z)^n\chi_n(z).
\end{gather}
Assume for a moment that $\psi_p$ is not an integer. We know that both series in~(\ref{eq:gamma-chi}) converge in $|z-1|<1$.
Furthermore, all functions $\gamma_{n,s}(z)$ and $\chi_n(z)$ are analytic the same disk. This implies that~(\ref{eq:gamma-chi}) is only possible for all $z$ in a~disk centered at~$1$ if
\begin{gather}\label{eq:zero}
\sum\limits_{n=0}^{\infty}(1-z)^n\gamma_{n,s}(z)\equiv0\qquad \text{and}\qquad \sum_{n=0}^{\infty}(1-z)^n\chi_n(z)\equiv0.
\end{gather}
In terms of the functions $\chi_n(z)$ the claimed identity (\ref{eq:identity1}) takes the form
\begin{gather}\label{eq:chi-identity}
\sum\limits_{j=0}^{m}\frac{(-1)^{j}}{j!}\chi_{m-j}^{(j)}(1)=0,
\end{gather}
which we prove by induction in~$m$.

Letting $z\to1$ in the second identity in (\ref{eq:zero}) we get $\chi_0(1)=0$ which
establishes our claim for $m=0$. Next, suppose~(\ref{eq:chi-identity}) holds for $m=0,1,2,\ldots,r-1$. Divide the second identity in~(\ref{eq:zero}) by $(1-z)^r$ and expand each $\chi_n(z)$, $n=0,1,\ldots,r$, in Taylor series around $z=1$:
\begin{gather*}
0=\sum_{n=0}^{r}(1-z)^{n-r}\bigl\{\chi_n(1)+\chi_n'(1)(z-1)+\cdots\\
\hphantom{0=}{}+\chi_n^{(r-n)}(z-1)^{r-n}/(r-n)!+O\big((z-1)^{r-n+1}\big)\bigr\}+\sum_{n=r+1}^{\infty}(1-z)^{n-r}\chi_n(z)\\
\hphantom{0}{}
=\sum\limits_{k=0}^{r}(1-z)^{-k}\sum\limits_{i=0}^{r-k}\frac{(-1)^i}{i!}\chi_{r-k-i}^{(i)}(1)+O(z-1)
=\sum\limits_{i=0}^{r}\frac{(-1)^i}{i!}\chi_{r-k-i}^{(i)}(1)+O(z-1),
\end{gather*}
where the last equality is by induction hypothesis. We now obtain~(\ref{eq:chi-identity}) for $m=r$ on letting $z\to1$ in this formula. It is immediate to check that the claimed identity~(\ref{eq:identity2}) takes the form{\samepage
\begin{gather*}
\sum\limits_{j=0}^{m}\frac{(-1)^{j}}{j!}\gamma_{m-j,s}^{(j)}(1)=0,
\end{gather*}
which can be demonstrated in a similar fashion starting with the f\/irst identity in~(\ref{eq:zero}).}

Finally, we remove the assumption that $\psi_p$ is not an integer. The left-hand sides of (\ref{eq:identity1}) and (\ref{eq:identity2}) are analytic functions of, say, parameter $b_1$ except for possible poles. Identities (\ref{eq:identity1}) and (\ref{eq:identity2}) for non-integer $\psi_p$ then clearly imply by analytic continuation that these poles are removable and both identities hold for all $\psi_p$.
\end{proof}

\begin{Corollary}\label{cr:3F2circular}
Suppose for some $i\in\{1,2,3\}$ inequality $\Re(b_i)<\Re(a_k+1)$ holds for $k\in\{1,2,3\}$. Then
\begin{gather}\label{eq:3F2circular1}
\sum\limits_{k=1}^{3}\frac{\sin(\pi(\b-a_k))\Gamma(1-\b_{[i]}+a_k)}{\sin(\pi(\a_{[k]}-a_k))\Gamma(\psi-\b_{[i]}+a_k)}\,
{}_{3}F_{2} \left( \begin{matrix}\psi-1,b_i-\a_{[k]}\\\psi-\b_{[i]}+a_k\end{matrix} \right)=0
\end{gather}
and
\begin{gather}
\sum\limits_{k=1}^{3}
\frac{\sin(\pi(\b-a_k))\Gamma(1-\b_{[i]}+a_k)}{\sin(\pi(\a_{[k]}-a_k))\Gamma(\psi-\b_{[i]}+a_k)}
\left\{(1-\b_{[i]}+a_k)\,
{}_{3}F_{2} \left( \begin{matrix}\psi-2,b_i-\a_{[k]}\\\psi-\b_{[i]}+a_k\end{matrix} \right)\right.\nonumber\\
\left.\qquad{}
-a_k(2-\psi)\, {}_{3}F_{2} \left( \begin{matrix}\psi-1,b_i-\a_{[k]}\\ \psi-\b_{[i]}+a_k\end{matrix} \right)\right\}=0,\label{eq:3F2circular2}
\end{gather}
where $\psi=\sum\limits_{k=1}^{3}(b_k-a_k)$ and ${}_pF_{p-1}$ without argument is understood as ${}_pF_{p-1}(1)$.
\end{Corollary}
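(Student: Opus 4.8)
The plan is to recognize \eqref{eq:3F2circular1} and \eqref{eq:3F2circular2} as the instances $m=0$ and $m=1$ of identity \eqref{eq:identity1} specialized to $p=3$, after inserting the closed form \eqref{eq:partial} of the coefficients $h^k_3(n)$. Because \eqref{eq:partial} singles out $b_1$ (it occurs in the top parameter $b_1-\a_{[s]}$ and in the bottom parameters $\psi_3-\b_{[1]}+a_s$), I would first use the permutation symmetry of $h^s_3(n)$ in the components of $\b$ noted immediately after \eqref{eq:partial} to move an arbitrary $b_i$ into the role of $b_1$; the removed index then becomes the $i$ of the corollary, and $\b_{[i]}$ collects the two remaining components of $\b$.

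For $m=0$, identity \eqref{eq:identity1} reduces to $\sum_{k=1}^3 h^k_3(0)\sin(\pi(\b-a_k))/\sin(\pi(\a_{[k]}-a_k))=0$, since $[a_k]_0=1$. Reading $\Gamma(1-\b_{[i]}+a_k)$ as $\prod_{j\ne i}\Gamma(1-b_j+a_k)$, the $n=0$ case of \eqref{eq:partial} factors as a $k$-independent constant $-\Gamma(\psi)/(1-\psi)$ times $\Gamma(1-\b_{[i]}+a_k)/\Gamma(\psi-\b_{[i]}+a_k)$ times the ${}_3F_2$ with top entry $\psi-1$. Dividing the whole sum by this nonzero constant yields \eqref{eq:3F2circular1} verbatim.

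For $m=1$, identity \eqref{eq:identity1} becomes $\sum_{k=1}^3\bigl(h^k_3(1)-a_k h^k_3(0)\bigr)\sin(\pi(\b-a_k))/\sin(\pi(\a_{[k]}-a_k))=0$, using $[a_k]_1=a_k$. The key step is to extract a common $k$-independent factor from both pieces. From \eqref{eq:partial} the prefactor at $n=1$ is $-\Gamma(\psi)/\bigl((1-\psi)(2-\psi)\bigr)$, because $(1-\psi)_2=(1-\psi)(2-\psi)$, and the shift $\Gamma(2-b_j+a_k)=(1-b_j+a_k)\Gamma(1-b_j+a_k)$ for $j\ne i$ turns the $n=1$ numerator into $(1-\b_{[i]}+a_k)\Gamma(1-\b_{[i]}+a_k)$, where $(1-\b_{[i]}+a_k):=\prod_{j\ne i}(1-b_j+a_k)$. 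Pulling out $-\Gamma(\psi)/\bigl((1-\psi)(2-\psi)\bigr)$ together with $\Gamma(1-\b_{[i]}+a_k)/\Gamma(\psi-\b_{[i]}+a_k)$, and writing $a_k h^k_3(0)$ over the same denominator by inserting $(2-\psi)/(2-\psi)$, the bracket collapses to exactly the brace of \eqref{eq:3F2circular2}; dividing by the constant finishes this case.

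It remains to settle convergence and non-generic parameters. A degree count of the ${}_3F_2(1)$ involved shows that the excess of the sum of lower parameters over upper parameters equals $a_k+1-b_i$ for the series with top entry $\psi-1$ and $a_k+2-b_i$ for the one with top entry $\psi-2$; hence the hypothesis $\Re(b_i)<\Re(a_k+1)$ for $k\in\{1,2,3\}$ is precisely what makes every ${}_pF_{p-1}(1)$ absolutely convergent and keeps the numerator gammas pole-free. The constants $-\Gamma(\psi)/(1-\psi)$ and $-\Gamma(\psi)/\bigl((1-\psi)(2-\psi)\bigr)$ are nonzero for non-integer $\psi$, so the divisions are legitimate there, and integer values of $\psi$ follow by the same analytic-continuation argument in a single parameter used at the end of the proof of Theorem~\ref{th:identitiesNorl}. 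I expect the only genuine effort to lie in the $m=1$ bookkeeping, namely confirming that the two Pochhammer prefactors share the claimed common factor and that the Gamma shift reproduces exactly the coefficient $(1-\b_{[i]}+a_k)$; the underlying mechanism is merely the specialization of \eqref{eq:identity1}.
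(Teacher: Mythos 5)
Your proposal is correct and follows exactly the paper's own (very terse) proof: specialize identity \eqref{eq:identity1} to $p=3$ with $m=0$ and $m=1$, insert the closed form \eqref{eq:partial} for $h_3^k(0)$ and $h_3^k(1)$ (using the symmetry in $\b$ to replace $b_1$ by $b_i$), and divide out the common $k$-independent constant. Your bookkeeping for the $m=1$ case and the parametric-excess computation justifying the hypothesis $\Re(b_i)<\Re(a_k+1)$ are both accurate.
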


\begin{proof}
Put $p=3$. Identities (\ref{eq:3F2circular1}) and (\ref{eq:3F2circular2}) are now reformulations of~(\ref{eq:identity1}) for $m=0$ and $m=1$, respectively. The coef\/f\/icients~$h_3^k(0)$ and~$h_3^k(1)$ have been computed by for\-mu\-la~(\ref{eq:partial}).
\end{proof}

The next corollary is a rewriting of (\ref{eq:identity2}) for $m=0$ in view of $g^k_p(0)=1$.

\begin{Corollary}\label{cr:ptolemy}
For any complex vectors $\a$, $\b$ the following identity holds
\begin{gather}\label{eq:ptolemy}
\sum\limits_{k=1}^{p}\frac{\sin(\pi(\b-a_k))}{\sin(\pi(\a_{[k]}-a_k))} =\sin(\pi\psi_p).
\end{gather}
The right-hand side gives a continuous extension of the left-hand side if $\a_{[k]}-a_k$ contains integers.
\end{Corollary}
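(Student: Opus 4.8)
The statement to be proved is the identity
\[
\sum\limits_{k=1}^{p}\frac{\sin(\pi(\b-a_k))}{\sin(\pi(\a_{[k]}-a_k))} =\sin(\pi\psi_p),
\]
and the corollary text already tells us the strategy: it is the case $m=0$ of identity \eqref{eq:identity2} from Theorem~\ref{th:identitiesNorl}. So the plan is simply to specialize that identity.

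The plan is to set $m=0$ in \eqref{eq:identity2}. In that identity the only surviving term of the outer $j$-sum is $j=0$, since $[a_k]_0=1$ and $(\psi_p)_0=1$, so the whole expression collapses to
\[
g^s_p(0)\sin(\pi\psi_p)-\sum_{k=1}^{p}g^k_p(0)\frac{\sin(\pi(\b-a_k))}{\sin(\pi(\a_{[k]}-a_k))}=0.
\]
Then I would invoke the normalization $g^k_p(0)=1$ for every $k$, which is recorded in both \eqref{eq:Norl-xi} (through $g^p_p(0)=1$) and explicitly in the statement of Theorem~\ref{th:identitiesNorl}; this is immediate from the triangular initial system for the recurrence, whose first line reads $g^k_p(0)=1$. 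Substituting $g^k_p(0)=g^s_p(0)=1$ leaves exactly
\[
\sin(\pi\psi_p)=\sum_{k=1}^{p}\frac{\sin(\pi(\b-a_k))}{\sin(\pi(\a_{[k]}-a_k))},
\]
which is \eqref{eq:ptolemy}. Note that the arbitrary choice of $s$ in \eqref{eq:identity2} disappears here, consistent with the fact that the resulting identity has no $s$-dependence.

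The only genuine subtlety, and what I expect to be the main obstacle, is the validity of the identity when some difference $a_i-a_j$ is an integer, so that a factor $\sin(\pi(a_i-a_k))$ in a denominator vanishes and an individual summand blows up. This is precisely the content of the final sentence of the corollary. Here I would appeal to the analytic-continuation argument already established at the end of the proof of Theorem~\ref{th:identitiesNorl}: the left-hand side is a meromorphic function of the parameters (say of $a_1$), the identity holds on the open dense set where the $a_i$ are distinct modulo $1$ and $\psi_p\notin\Z$, and since the right-hand side $\sin(\pi\psi_p)$ is entire, the apparent poles on the left must be removable. Hence the right-hand side furnishes the continuous (indeed analytic) extension of the left-hand side across the loci where some $\a_{[k]}-a_k$ contains integers, which is exactly the assertion of the last sentence. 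No separate computation is needed beyond quoting these two ingredients.
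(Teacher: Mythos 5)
Your proposal is correct and matches the paper exactly: the paper derives Corollary~\ref{cr:ptolemy} precisely as the $m=0$ case of \eqref{eq:identity2} combined with $g^k_p(0)=1$, and the removability of the apparent poles is handled by the same analytic-continuation argument already present at the end of the proof of Theorem~\ref{th:identitiesNorl}. Nothing is missing.
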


\begin{Remark}
For $p=2$ this is equivalent to Ptolemy's theorem: if a quadrilateral is inscribed in a circle then the product of the lengths of its diagonals is equal to the sum of the products of the lengths of the pairs of opposite sides, which can be written as
\begin{gather*}
\sin(\theta_3-\theta_1)\sin(\theta_4-\theta_2)=\sin(\theta_2-\theta_1)\sin(\theta_4-\theta_3)+\sin(\theta_4-\theta_1)\sin(\theta_3-\theta_2).
\end{gather*}
Further details regarding the history behind the identity~(\ref{eq:ptolemy}) can be found in the introduction and~\cite{Johnson}.
\end{Remark}

\begin{Corollary}\label{cr:symmetric}
For each $m\in\N_0$ and each $p\in\N$ the function
\begin{gather}
F_{p,m}(\a,\b)=\sum\limits_{j=0}^{m}\frac{(-1)^j}{j!}[a_k]_j[\psi_p+m-1]_jg^k_p(m-j)\nonumber\\
\hphantom{F_{p,m}(\a,\b)}{}
=\sum\limits_{j=0}^{m}\sum\limits_{r=0}^{m-j}
\frac{(-1)^{m-j}(\psi_p+r)_{m-r}}{j!(m-j-r)!}l_r[a_k]_j\Be^{(m-j+\psi_p)}_{m-j-r}(1-a_k)\label{eq:Fsymmetry}
\end{gather}
is independent of $k$ and represents a symmetric polynomial in the
components of~$\a$ and~$\b$ $($separately$)$. Here $l_r$ is defined by the recurrence relation~\eqref{eq:lr}.
\end{Corollary}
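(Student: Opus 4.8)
The plan is to reduce both assertions to the already-proved identity~\eqref{eq:identity2} together with the manifest symmetry of the N{\o}rlund coefficients $g^k_p(n)$. The starting point is a purely algebraic simplification: writing the falling factorial as a ratio of Pochhammer symbols one has $[\psi_p+m-1]_j=(\psi_p+m-j)_j=(\psi_p)_m/(\psi_p)_{m-j}$, so that
\begin{gather*}
F_{p,m}(\a,\b)=(\psi_p)_m\,S_k,\qquad S_k:=\sum_{j=0}^{m}\frac{(-1)^{j}}{(\psi_p)_{m-j}\,j!}[a_k]_j\,g^k_p(m-j).
\end{gather*}
The quantity $S_k$ is exactly the inner sum appearing in~\eqref{eq:identity2}; rearranging that identity gives, for every $s\in\{1,\ldots,p\}$,
\begin{gather*}
\sin(\pi\psi_p)\,S_s=\sum_{k=1}^{p}\frac{\sin(\pi(\b-a_k))}{\sin(\pi(\a_{[k]}-a_k))}\,S_k.
\end{gather*}

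First I would establish independence of $k$. The right-hand side above does not involve $s$, so $\sin(\pi\psi_p)S_s$ is the same for all $s$; when $\psi_p\notin\Z$ we may divide by $\sin(\pi\psi_p)\neq0$ to conclude that $S_s$, and hence $F_{p,m}=(\psi_p)_m S_s$, is independent of $s$. The first formula in~\eqref{eq:Fsymmetry} exhibits $F_{p,m}$ as a polynomial in the components of $\a$ and $\b$ (each $g^k_p(m-j)$ being a polynomial by~\eqref{eq:Norlund-explicit}), so the difference of the two polynomials obtained from two indices vanishes on the Zariski-dense set $\{\psi_p\notin\Z\}$ and therefore vanishes identically; thus $k$-independence holds for all parameter values. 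The equivalence of the two displayed forms of $F_{p,m}$ then follows by inserting the Bernoulli representation~\eqref{eq:Norlund-Bernoulli} for $g^k_p(m-j)$ and telescoping $(\psi_p+r)_{m-j-r}[\psi_p+m-1]_j=(\psi_p+r)_{m-r}$.

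Symmetry in $\b$ is immediate: each $g^k_p(m-j)$ is symmetric in the components of $\b$, while $[a_k]_j$ and $[\psi_p+m-1]_j$ involve $\b$ only through the symmetric combination $\psi_p=\sum_i(b_i-a_i)$, so every summand is symmetric in $\b$. The delicate point, which I expect to be the main obstacle, is symmetry in $\a$, since the factor $[a_k]_j$ singles out $a_k$ and destroys the manifest symmetry of the $k$-th summand; here one must leverage the $k$-independence just proved. For a transposition interchanging $a_i$ and $a_j$ with $p\ge3$, I would compute $F_{p,m}$ using any index $k\notin\{i,j\}$: then $[a_k]_j$ and $\psi_p$ are untouched, and $g^k_p(m-j)$ is symmetric in $\a_{[k]}$, which contains both $a_i$ and $a_j$, so the value is unchanged; as transpositions generate $S_p$, this yields full symmetry in $\a$. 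The case $p=2$ must be handled directly, by observing that interchanging $a_1$ and $a_2$ carries the expression computed with index $1$ into the one computed with index $2$, which equals it by $k$-independence.
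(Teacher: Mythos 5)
Your proposal is correct and follows essentially the same route as the paper: the first formula is extracted from identity~(\ref{eq:identity2}) via $1/(\psi_p)_{m-j}=[\psi_p+m-1]_j/(\psi_p)_m$, the second by substituting~(\ref{eq:Norlund-Bernoulli}) and telescoping, with your explicit treatment of $k$-independence (divide by $\sin(\pi\psi_p)$, then extend by polynomiality) and of the symmetry in $\a$ (use an index $k\notin\{i,j\}$ for $p\ge3$, handle $p=2$ by $k$-independence) merely spelling out what the paper leaves implicit. One small remark: carrying out your substitution literally yields the sign $(-1)^{m-r}$ rather than the $(-1)^{m-j}$ printed in~(\ref{eq:Fsymmetry}), which appears to be a typo in the stated formula rather than a flaw in your argument.
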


\begin{proof}
Indeed, the f\/irst formula in (\ref{eq:Fsymmetry}) follows from (\ref{eq:identity2}) once we open the braces and apply the obvious relation $1/(\psi_p)_{m-j}=[\psi_p+m-1]_{j}/(\psi_p)_m$. Substitution of (\ref{eq:Norlund-Bernoulli})
for $g^k_p$ leads to the second formula.
\end{proof}

{\sloppy The following theorem can be viewed as a new method for computing the coef\/f\/icients $h_p(n|\al,\be)$ in expansion~(\ref{eq:Buehring1}) given by the multiple sum~(\ref{eq:hp-explicit}) by relating them to the numbers $D_n^{[k,s]}$ given by the single sums~(\ref{eq:Norlund5.35}) and ~(\ref{eq:Norlund5.36}).

}

\begin{Theorem}\label{th:identitiesBuehr}
For each nonnegative integer $n$ and arbitrary $s\in\{1,\ldots,p\}$ the following identity holds
\begin{gather}\label{eq:G2ppp}
h_p^s(n)=-\frac{1}{\pi\sin(\pi\psi_p)}\sum_{\substack{k=1\\k\ne{s}}}^{p} \frac{\sin(\pi(\b-a_k))}{\sin(\pi(\a_{[k,s]}-a_k))}D_n^{[k,s]},
\end{gather}
where $h_p^s(n)$ is defined by expansion \eqref{eq:Buehring13} and given explicitly by~\eqref{eq:hp-explicit}, while $D_n^{[k,s]}$ are given by~\eqref{eq:Norlund5.35} or~\eqref{eq:Norlund5.36}. Moreover, for arbitrary distinct integers $s$, $i$, $k$ from the set $\{1,2,\ldots,p\}$ the following identity holds
\begin{gather}
\sum\limits_{j=0}^{n}\frac{(-1)^{j}}{j!}\left([a_i]_j\sin(\pi(a_s-a_i))D_{n-j}^{[s,i]}
+[a_k]_j\sin(\pi(a_i-a_k))D_{n-j}^{[i,k]}\right.\nonumber\\
\left.
\hphantom{\sum\limits_{j=0}^{n}\frac{(-1)^{j}}{j!}}{} +[a_s]_j\sin(\pi(a_k-a_s))D_{n-j}^{[k,s]}\right)=0.\label{eq:G2pppNorlund5.3.1}
\end{gather}
\end{Theorem}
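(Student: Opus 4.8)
The plan is to read off both identities from the power-series expansion \eqref{eq:G2pppNorlund}, which defines the numbers $D_n^{[k,s]}$ as the Taylor coefficients of $z^{-a_s}G^{2,p}_{p,p}(z\,|\,\b;a_k,a_s,\a_{[k,s]})$ at $z=1$. Identity \eqref{eq:G2ppp} will come from N{\o}rlund's connection formula \eqref{eq:Norlund5.40} together with B\"uhring's expansion \eqref{eq:Buehring13}, while identity \eqref{eq:G2pppNorlund5.3.1} will come from N{\o}rlund's three-term relation \eqref{eq:G2pppNorlund5.3}.

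For \eqref{eq:G2ppp}, I would substitute \eqref{eq:Buehring13} into the left-hand side of \eqref{eq:Norlund5.40} and match the two sides according to their behaviour at $z=1$. Since $G^{p,0}_{p,p}$ is purely singular there — it equals $(1-z)^{\psi_p-1}$ times an analytic factor by \eqref{eq:Norl-xi} — while each $G^{2,p}_{p,p}$ is analytic at $z=1$ by \eqref{eq:G2pppNorlund}, uniqueness of the singular/regular splitting (valid when $\psi_p\notin\Z$) forces the regular parts to agree. This is precisely relation \eqref{eq:regularconnection}; inserting $G^{2,p}_{p,p}(z\,|\,\b;a_k,a_s,\a_{[k,s]})=z^{a_s}\sum_n D_n^{[k,s]}(1-z)^n$, cancelling the common $z^{a_s}$, and comparing coefficients of $(1-z)^n$ yields \eqref{eq:G2ppp}. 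The restriction $\psi_p\notin\Z$ and the parameter conditions behind \eqref{eq:G2pppNorlund} are then removed by analytic continuation in a free parameter, exactly as at the end of the proof of Theorem~\ref{th:identitiesNorl}.

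For \eqref{eq:G2pppNorlund5.3.1}, I would expand each of the three $G$-functions in the homogeneous relation \eqref{eq:G2pppNorlund5.3} by \eqref{eq:G2pppNorlund}. This rewrites its left-hand side as $\sum_{n\ge0}(1-z)^n\Theta_n(z)\equiv0$ with
\[
\Theta_n(z)=\sin(\pi(a_s-a_i))D_n^{[s,i]}z^{a_i}+\sin(\pi(a_i-a_k))D_n^{[i,k]}z^{a_k}+\sin(\pi(a_k-a_s))D_n^{[k,s]}z^{a_s},
\]
each $\Theta_n$ being analytic at $z=1$. I would then reuse the coefficient-extraction lemma established by induction inside the proof of Theorem~\ref{th:identitiesNorl}: if $\sum_{n\ge0}(1-z)^n\Theta_n(z)\equiv0$ with $\Theta_n$ analytic at $1$, then $\sum_{j=0}^{m}\frac{(-1)^j}{j!}\Theta_{m-j}^{(j)}(1)=0$ for every $m$. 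Finally, differentiating term by term through $\frac{d^j}{dz^j}z^a=[a]_jz^{a-j}$ and setting $z=1$ gives $\Theta_n^{(j)}(1)=[a_i]_j\sin(\pi(a_s-a_i))D_n^{[s,i]}+[a_k]_j\sin(\pi(a_i-a_k))D_n^{[i,k]}+[a_s]_j\sin(\pi(a_k-a_s))D_n^{[k,s]}$, which is exactly the bracket in \eqref{eq:G2pppNorlund5.3.1}.

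Since the hard analytic work — justifying the coefficient-extraction step — has already been done for Theorem~\ref{th:identitiesNorl}, the remaining difficulty is almost entirely bookkeeping. The one point demanding care is that in \eqref{eq:G2pppNorlund} the power prefactor is governed by the \emph{second} lower parameter, so I must check that the three summands of \eqref{eq:G2pppNorlund5.3} generate precisely the prefactors $z^{a_i},z^{a_k},z^{a_s}$ and hence the falling factorials $[a_i]_j,[a_k]_j,[a_s]_j$ paired with $D_{n}^{[s,i]},D_n^{[i,k]},D_n^{[k,s]}$ as written. The only other technical caveat is that the expansions \eqref{eq:G2pppNorlund} carry convergence and no-pole restrictions; these are harmless and are lifted by the same analytic-continuation argument used throughout.
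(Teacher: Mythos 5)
Your proposal is correct and follows essentially the same route as the paper: identity \eqref{eq:G2ppp} is obtained by substituting the expansion \eqref{eq:G2pppNorlund} into \eqref{eq:regularconnection} and equating coefficients of $(1-z)^n$, and identity \eqref{eq:G2pppNorlund5.3.1} is derived from the three-term relation \eqref{eq:G2pppNorlund5.3}. Your version merely spells out the details the paper leaves implicit (the uniqueness of the singular/regular splitting behind \eqref{eq:regularconnection} and the coefficient-extraction lemma from the proof of Theorem~\ref{th:identitiesNorl} that makes ``direct consequence'' precise), and your bookkeeping of the prefactors $z^{a_i}$, $z^{a_k}$, $z^{a_s}$ is accurate.
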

\begin{proof}
To prove~(\ref{eq:G2ppp}) it suf\/f\/ices to substitute expansion~(\ref{eq:G2pppNorlund}) into formula
(\ref{eq:regularconnection}) and equate coef\/f\/icients. Identity~(\ref{eq:G2pppNorlund5.3.1}) is a direct consequence of~(\ref{eq:G2pppNorlund5.3}).
\end{proof}

\begin{Corollary}
For each $n\in\N_0$ the following identity holds true
\begin{gather*}
\frac{\Gamma(a_3-a_1)}{\Gamma(\b-a_1)\Gamma(2+a_1+a_2-b_1-b_2+n)}\,
{}_{3}F_{2} \left( \begin{matrix}1+a_1-b_1,1+a_1-b_2,b_3-a_3\\1+a_1-a_3,2+a_1+a_2-b_1-b_2+n\end{matrix} \right)
\\
\quad{}
+\frac{\Gamma(a_1-a_3)}{\Gamma(\b-a_3)\Gamma(2+a_2+a_3-b_1-b_2+n)}\,
{}_{3}F_{2} \left( \begin{matrix}1+a_3-b_1,1+a_3-b_2,b_3-a_1\\1+a_3-a_1,2+a_2+a_3-b_1-b_2+n\end{matrix} \right)
\\
{} =\frac{1}{\Gamma(2-\psi+n)\Gamma(\psi+a_2-b_1)\Gamma(\psi+a_2-b_2)}\,
{}_{3}F_{2} \left( \begin{matrix}b_3-a_1,b_3-a_3,\psi-1-n\\\psi+a_2-b_1,\psi+a_2-b_2\end{matrix} \right).
\end{gather*}
\end{Corollary}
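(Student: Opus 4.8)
The plan is to recognize the corollary as the $p=3$, $s=2$ instance of identity~(\ref{eq:G2ppp}) and to make its three ingredients fully explicit before matching. For $p=3$, $s=2$ the sum in~(\ref{eq:G2ppp}) runs over $k\in\{1,3\}$, and since $\a_{[1,2]}=(a_3)$ and $\a_{[3,2]}=(a_1)$ the two sine denominators are $\sin(\pi(a_3-a_1))$ and $\sin(\pi(a_1-a_3))$. Thus~(\ref{eq:G2ppp}) becomes
\begin{gather*}
h_3^2(n)=-\frac{1}{\pi\sin(\pi\psi)}\left[\frac{\sin(\pi(\b-a_1))}{\sin(\pi(a_3-a_1))}D_n^{[1,2]}+\frac{\sin(\pi(\b-a_3))}{\sin(\pi(a_1-a_3))}D_n^{[3,2]}\right],
\end{gather*}
where $\psi=\psi_3$, and the task is to identify the two summands on the right with the two ${}_3F_2$ terms on the left of the corollary.

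First I would evaluate $D_n^{[1,2]}$ and $D_n^{[3,2]}$ from~(\ref{eq:Norlund5.35}). The crucial point is that for $p=3$ the inner ${}_{p-1}F_{p-2}$ is a terminating ${}_2F_1(-j,\,1-b_3+a_k;\,1-a_{\mathrm{rem}}+a_k;\,1)$, which is summed by the Chu--Vandermonde identity ${}_2F_1(-j,\beta;\gamma;1)=(\gamma-\beta)_j/(\gamma)_j$. For $k=1$ this gives the factor $(b_3-a_3)_j/(1-a_3+a_1)_j$, so the double sum in~(\ref{eq:Norlund5.35}) collapses to the single ${}_3F_2$ of the first term of the corollary with prefactor $\Gamma(1-\b+a_1)\Gamma(1-b_1+a_2+n)\Gamma(1-b_2+a_2+n)/[\Gamma(1-a_3+a_1)\Gamma(2+a_1+a_2-b_1-b_2+n)n!]$; the $k=3$ case is identical after interchanging $a_1$ and $a_3$.

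Next I would write $h_3^2(n)$ from~(\ref{eq:partial}). Setting $s=2$ there yields a ${}_3F_2$ with numerator parameters $\psi-1-n,\,b_1-a_1,\,b_1-a_3$; invoking the invariance of $h_3^s(n)$ under permutations of $\b$ (noted just after~(\ref{eq:partial})) to interchange $b_1$ and $b_3$ recasts it with numerator $\psi-1-n,\,b_3-a_1,\,b_3-a_3$ and denominator $\psi+a_2-b_1,\,\psi+a_2-b_2$, which is precisely the ${}_3F_2$ on the right of the corollary.

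Finally I would substitute all three explicit expressions into the displayed form of~(\ref{eq:G2ppp}). Since the very same ${}_3F_2$ occurs in $h_3^2(n)$ and in each of $D_n^{[1,2]}$, $D_n^{[3,2]}$, it factors out termwise, and the identity reduces to a gamma-function bookkeeping: Euler's reflection formula in the form $\sin(\pi x)=\pi/[\Gamma(x)\Gamma(1-x)]$ converts each quotient $\sin(\pi(\b-a_k))/\sin(\pi(a_j-a_k))$ into gamma factors, while $(1-\psi)_{n+1}=\Gamma(2-\psi+n)/\Gamma(1-\psi)$ disposes of the Pochhammer factor in $h_3^2(n)$. After these substitutions the prefactors on the two sides agree term by term, producing the first and second summands of the corollary's left-hand side. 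I expect the only real difficulty to be organizational, namely tracking the many $\Gamma$-factors through the reflection step so that the powers of $\pi$ and the factor $\sin(\pi\psi)$ cancel exactly; all manipulations are valid under the convergence hypotheses attached to~(\ref{eq:Norlund5.35}) and~(\ref{eq:partial}), and the statement for general parameters follows by analytic continuation.
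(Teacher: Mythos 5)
Your proposal is correct and follows the paper's own route exactly: specialize~(\ref{eq:G2ppp}) to $p=3$, $s=2$, collapse $D_n^{[1,2]}$ and $D_n^{[3,2]}$ via Chu--Vandermonde applied to the inner ${}_2F_1$ in~(\ref{eq:Norlund5.35}), take $h_3^2(n)$ from~(\ref{eq:partial}) with $b_1$ and $b_3$ interchanged, and finish with Euler's reflection formula to match the gamma prefactors. The only imprecision is the phrase claiming ``the very same ${}_3F_2$'' occurs in all three pieces --- the three ${}_3F_2$'s are distinct and each simply carries over into its own term of the identity --- but your subsequent derivation makes clear you mean exactly that, so there is no gap.
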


\begin{proof} For $p=3$, $s=2$ formula (\ref{eq:G2ppp}) takes the form
\begin{gather}\label{eq:G2ppp1}
h_3^2(n)=-\frac{1}{\pi\sin(\pi\psi)}\left(\frac{\sin(\pi(\b-a_1))}{\sin(\pi(a_3-a_1)}D_n^{[1,2]}+
\frac{\sin(\pi(\b-a_3))}{\sin(\pi(a_1-a_3)}D_n^{[3,2]}\right).
\end{gather}
Now, write $h_3^2(n)$ according to~(\ref{eq:partial}) and exchange the roles of~$b_1$ and~$b_3$. Next, apply
Chu--Vandermonde identity on the right-hand side of~(\ref{eq:Norlund5.35}) to get
\begin{gather*}
D_n^{[k,s]}=\frac{\Gamma(1-\b+a_k)\Gamma(1-b_1+a_s+n)\Gamma(1-b_2+a_s+n)}{\Gamma(1-\a_{[k,s]}+a_k)\Gamma(2+a_k+a_s-b_1-b_2+n)n!}\\
\hphantom{D_n^{[k,s]}=}{}
\times {}_3F_{2}\left( \begin{matrix}1-b_1+a_k,1-b_2+a_k, b_3-\a_{[k,s]}
\\
2+a_k+a_s-b_1-b_2+n, 1-\a_{[k,s]}+a_k \end{matrix} \right).
\end{gather*}
Substituting this into (\ref{eq:G2ppp1}), applying Euler's ref\/lection formula for the gamma function and rearranging we get the claimed identity.
\end{proof}

\appendix

\section[Def\/inition of Meijer's $G$-function revisited]{Def\/inition of Meijer's $\boldsymbol{G}$-function revisited}\label{appendixA}

Meijer's $G$-function has been def\/ined in the introduction, where we mentioned various aspects that need to
be clarif\/ied in order that this def\/inition be consistent. Most accurate information with proofs regarding
 $G$-function's def\/inition is contained, in our opinion, in the series of papers of Meijer himself~\cite{Meijer},
 the paper~\cite{Braaksma} by Braaksma and in the f\/irst chapters of the books~\cite{ParKam} and~\cite{KilSaig}.
 Further facts are scattered in the literature with most comprehensive collection being \cite[Chapter~8]{PBM3},~\cite{NIST} and especially~\cite{Wolfram}. An accessible introduction to $G$-function can be found in a~nice recent
 survey by Beals and Szmigielski~\cite{BealsSzmig}. In this paper we only deal with the function~ $G^{m,n}_{p,p}$.
 For convenience, we have gathered all the necessary information regarding its def\/inition in the following theorem.

\begin{Theorem}\label{th:sumres}
Denote $a^*=m+n-p$, $\psi=\sum\limits_{k=1}^{p}(b_k-a_k)$ and
\begin{gather*}
\G(s)=\frac{\Gamma(b_1 + s)\cdots\Gamma(b_m + s)\Gamma(1-a_1 - s)\cdots\Gamma(1-a_n - s)}
{\Gamma(a_{n+1} + s)\cdots\Gamma(a_p + s)\Gamma(1-b_{m+1} - s)\cdots\Gamma(1-b_{p} - s)}.
\end{gather*}
\begin{enumerate}\itemsep=0pt
\item[$(a)$] If $|z|<1$ then the integral in~\eqref{eq:G-defined} converges for $\L=\L_{-}$ and
\begin{gather}\label{eq:sumresleft}
G^{m,n}_{p,p}\left(z\,\vline\, \begin{matrix} \a\\
\b\end{matrix} \right)=\sum\limits_{j=1}^m\sum\limits_{l=0}^\infty\res_{s=b_{jl}}\G(s)z^{-s}, \qquad b_{jl}=-b_j-l.
\end{gather}
If, in addition, $a^*>0$ and $|\arg(z)|<a^*\pi$ or $a^*=0$,
$\Re(\psi)<0$ and $0<z<1$ then the integral in~\eqref{eq:G-defined} also converges for $\L=\L_{i\gamma}$
and has the same value.

\item[$(b)$] If $|z|>1$ then the integral in~\eqref{eq:G-defined} converges for $\L=\L_{+}$ and
\begin{gather*}
G^{m,n}_{p,p}\left(z\,\vline\, \begin{matrix} \a\\
\b\end{matrix} \right)=-\sum\limits_{i=1}^n\sum\limits_{k=0}^\infty\res_{s=a_{ik}}\G(s)z^{-s}, \qquad a_{ik}=1-a_i+k.
\end{gather*}
If, in addition, $a^*>0$ and $|\arg(z)|<a^*\pi$ or $a^*=0$,
$\Re(\psi)<0$ and $z>1$ then the integral in~\eqref{eq:G-defined} also converges for~$\L=\L_{i\gamma}$
and has the same value.
\end{enumerate}
\end{Theorem}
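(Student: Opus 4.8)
The plan is to prove part $(a)$ in three stages --- convergence on $\L_{-}$, evaluation as a residue series, and comparison with $\L_{i\gamma}$ --- and then to obtain part $(b)$ for free by the substitution $s\mapsto-s$ in \eqref{eq:G-defined}. That substitution turns $\frac{1}{2\pi i}\int_{\L}\G(s)z^{-s}\,ds$ into the $G$-function integral for $G^{n,m}_{p,p}(1/z\,|\,1-\b;1-\a)$, carrying $\L_{-}$ to a right loop $\L_{+}$, the domain $|z|<1$ to $|z|>1$, and the left poles $b_{jl}=-b_j-l$ to the right poles $a_{ik}=1-a_i+k$ (one checks that the left poles $t=a_i-1-l$ of the reflected integrand are exactly $s=-t=a_{ik}$); the minus sign in $(b)$ is produced by the reversed orientation of the closing loop. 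Thus once $(a)$ is established, $(b)$ follows by applying it to the reflected function.

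First I would pin down the growth of $\G(s)$ by Stirling's formula $\log\Gamma(w)=(w-\tfrac12)\log w-w+\tfrac12\log2\pi+O(1/w)$. Writing $s=\sigma+i\tau$ and letting $\sigma\to-\infty$ inside the strip $\varphi_1\le\tau\le\varphi_2$, I would treat the four groups of gamma factors separately, using $\Gamma(w)\Gamma(1-w)=\pi/\sin(\pi w)$ to move the factors whose argument has $\Re\to-\infty$ into the denominator (their $|\sin\pi(\cdot)|$ stays bounded between the poles because $\tau$ is bounded on the horizontal tails). The crucial bookkeeping is that, with $u=-\sigma$, the coefficients of both $u\log u$ and $u$ cancel \emph{exactly} since $p=q$: the four counts $m,\,p-n,\,n,\,p-m$ combine to $-m+(p-n)+n-(p-m)=0$ in each case. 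Hence $\G(s)$ grows only polynomially, $|\G(s)|=O(|s|^{c})$, across the half-strip. Since $|z^{-s}|=|z|^{-\sigma}e^{\tau\arg z}$ and $|z|<1$ forces $|z|^{-\sigma}=e^{-\sigma\log|z|}\to0$ exponentially as $\sigma\to-\infty$, the integrand decays exponentially along the horizontal tails and $\int_{\L_{-}}\G(s)z^{-s}\,ds$ converges absolutely.

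Next I would close $\L_{-}$ to the far left by vertical segments placed midway between consecutive poles (at $\Re s=-N-\tfrac12$), so as to avoid all $b_{jl}$; the same exponential decay kills the closing integral as $N\to\infty$, and the residue theorem yields \eqref{eq:sumresleft}, the sum running only over the poles $b_{jl}$ of the factors $\Gamma(b_j+s)$. Here the standing hypothesis $a_i-b_j\notin\N$ guarantees that these are the \emph{only} poles to the left of $\L_{-}$, the poles $a_{ik}$ of $\Gamma(1-a_i-s)$ lying to the right and being uncancelled. For the vertical contour I would instead apply Stirling in the imaginary direction, $|\Gamma(x+iy)|\sim\sqrt{2\pi}\,|y|^{x-1/2}e^{-\pi|y|/2}$: the $m+n$ numerator gammas against the $2p-m-n$ denominator gammas combine to $|\G(\gamma+iy)|\sim C|y|^{\beta}e^{-\pi a^{*}|y|}$, while $|z^{-s}|\sim e^{y\arg z}$. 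For $a^{*}>0$ the product decays at both ends precisely when $|\arg z|<a^{*}\pi$; for $a^{*}=0$ the exponential disappears and convergence is decided by the power $|s|^{\beta}$, i.e.\ by $\Re\psi$ (with $0<z<1$ to suppress the oscillation), matching the two stated conditions.

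Finally, to see that $\L_{-}$ and $\L_{i\gamma}$ give the same value whenever both converge, I would deform one into the other: both contours leave every $b_{jl}$ on the left and every $a_{ik}$ on the right, so the region swept contains no poles, and the horizontal connecting segments at $\Im s=\pm T$ vanish as $T\to\infty$ by the imaginary-direction decay just established; Cauchy's theorem then equates the two integrals. The main obstacle will be exactly the asymptotic bookkeeping of the first stage together with the uniformity needed to discard the closing arcs: one must show the polynomial bound on $\G$ holds \emph{uniformly} across the strip (controlling the $\log|\sin\pi(\cdot)|$ terms away from the poles) and that the sharp exponent $\beta$ along vertical lines is correct, since it is this exponent that governs the borderline case $a^{*}=0$. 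Everything else reduces to routine applications of the residue theorem and contour deformation.
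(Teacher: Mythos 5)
Your plan is correct and follows essentially the same route as the paper: Stirling asymptotics to get convergence and the residue series on the loop contours $\L_{\pm}$, vanishing of the connecting arcs together with the absence of poles between $\L_{-}$ and $\L_{i\gamma}$ to identify the two integrals, and the reflection $s\mapsto-s$ (equivalently $G^{m,n}_{p,p}(z\,|\,\a;\b)=G^{n,m}_{p,p}(1/z\,|\,1-\b;1-\a)$) to transfer everything to $|z|>1$. The paper simply outsources the first two stages to Kilbas--Saigo and to Meijer/Whittaker--Watson and only writes out the reflection step, whereas you fill in those details yourself; the only nitpick is that in the borderline case $a^*=0$, $\Re\psi<0$ the oscillation of $z^{-iy}$ for $0<z<1$, $z\neq1$ is what \emph{rescues} conditional convergence of $\int|y|^{\Re\psi}\,dy$ rather than something to be suppressed.
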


\begin{Remark}
If $|z|=1$ and $\Re(\psi)<-1$ then according to
\cite[Theorem~1.1]{KilSaig} both integrals over $\L_{-}$ and over
$\L_{+}$ exist. If, in addition, $a^*>0$ and $|\arg(z)|<a^*\pi$,
then the integral over~$\L_{i\gamma}$ also exists. We found no
proof in the literature that these integrals are equal.
\end{Remark}

\begin{proof} The claims regarding the contours $\L_{+}$ and $\L_{-}$ have been demonstrated in \cite[Theorems~1.1, 1.2]{KilSaig}. They follow in a relatively straightforward manner from Stirling's asymptotic formula for the gamma function and have been observed by Meijer himself in \cite[Section~1]{Meijer}. If $a^*=0$, $\Re\psi<0$ the result was proved by Kilbas and Saigo \cite[Theorem~3.3]{KilSaig}. For $a^*>0$ and $|z|<1$ \cite[Theorem~D]{Meijer} states that the
integrals over $\L_{-}$ and $\L_{i\gamma}$ coincide for any real $\gamma$ which covers case (a) of the theorem. For the proof
Meijer refers to \cite[Section~14.51]{WW}, where it is essentially demonstrated that
\begin{gather}\label{eq:intzero}
\lim_{R\to\infty}\int_{l^-_\gamma(R)}\frac{\prod\limits_{j=1}^{m}\Gamma(s+b_j)\prod\limits_{i=1}^{n}\Gamma(1-s-a_i)}
{\prod\limits_{i=n+1}^{p}\Gamma(s+a_i)\prod\limits_{j=m+1}^{p}\Gamma(1-s-b_j)}z^{-s}ds=0,
\end{gather}
where $l^-_\gamma(R)$ is the shortest arc of the circle $|s|=R$ which connects the contours $\L_{-}$ and $\L_{i\gamma}$ (in fact, Whittaker and Watson proved a particular case, but the proof for the general case goes along exactly the same lines). Since~$\G(s)$ has no poles in the domain bounded by $\L_{-}$ and $\L_{i\gamma}$ this leads to equality of the integrals over $\L_{-}$ and $\L_{i\gamma}$ stated by Meijer. It remains to consider the case $|z|>1$, $\L=\L_{i\gamma}$. Denote by $l^+_\gamma(R)$ the ref\/lection of $l^{-}_\gamma(R)$ with respect to $\L_{i\gamma}$. For $|z|>1$ we get by changing $s$ to $-s$
\begin{gather*}
\int_{l^{+}_\gamma(R)} \frac{\prod\limits_{j=1}^{m}\Gamma(s+b_j)\prod\limits_{i=1}^{n}\Gamma(1-s-a_i)z^{-s}}
{\prod\limits_{i=n+1}^{p}\Gamma(s+a_i)\prod\limits_{j=m+1}^{p}\Gamma(1-s-b_j)}ds\\
\qquad{} = - \int_{l^{-}_{\gamma}(R)} \frac{\prod\limits_{j=1}^{m}\Gamma(-s+b_j)\prod\limits_{i=1}^{n}\Gamma(1+s-a_i)z^{s}}
{\prod\limits_{i=n+1}^{p}\Gamma(-s+a_i)\prod\limits_{j=m+1}^{p}\Gamma(1+s-b_j)}ds\\
\qquad{} =- \int_{l^{-}_{\gamma}(R)} \frac{\prod\limits_{i=1}^{n}\Gamma(s+b_i')\prod\limits_{j=1}^{m}\Gamma(1-s-a_j')}
{\prod\limits_{j=m+1}^{p}\Gamma(s+a_j')\prod\limits_{i=n+1}^{p}\Gamma(1-s-b_i')} (1/z)^{-s}ds.
\end{gather*}
Here $b_i'=1-a_i$, $a_j'=1-b_j$. In view of (\ref{eq:intzero}), we immediately conclude that for $|z|>1$
\begin{gather*}
\lim_{R\to\infty}\int_{l^+_{\gamma}(R)}\frac{\prod\limits_{j=1}^{m}\Gamma(s+b_j)\prod\limits_{i=1}^{n}\Gamma(1-s-a_i)}
{\prod\limits_{i=n+1}^{p}\Gamma(s+a_i)\prod\limits_{j=m+1}^{p}\Gamma(1-s-b_j)}z^{-s}ds=0,
\end{gather*}
which implies that the integrals over $\L_{+}$ and $\L_{i\gamma}$ coincide.
\end{proof}

\begin{Remark} If $p>q$ ($q>p$) the integral in (\ref{eq:G-defined}) exists for $\L=\L_{+}$ ($\L=\L_{-}$) and all
complex $z\ne{0}$ and is equal to the corresponding sum of residues \cite[Theorems~1.1 and 1.2]{KilSaig}. At the same time
if $a^*>0$ and $|\arg(z)|<a^*\pi$ or $a^*=0$ and $z>0$, $z\ne{1}$, the integral in (\ref{eq:G-defined}) also exists for
$\L=\L_{i\gamma}$. Most authors assume in this situation that $\L_{i\gamma}$ can be deformed into $\L=\L_{+}$ if $p>q$ or
$\L=\L_{-}$ if $q>p$ without altering the value of the integral. However, we were unable to f\/ind any proof of this claim in the
literature.
\end{Remark}

\begin{Remark}
 It follows from the above theorem that $G^{m,n}_{p,p}(z)$ is analytic in the sector $|\arg(z)|$ $<a^*\pi$
if $a^*>0$ (since the integral converges uniformly in~$z$ for $\L=\L_{i\gamma}$), while for $a^*\leq0$ we get two dif\/ferent
analytic functions~-- one def\/ined inside and the other outside of the unit circle, see~\cite[(8.2.2.7)]{PBM3}.
\end{Remark}

In the proof of the above theorem we essentially used the next well-known ref\/lection property of $G$-function
\begin{gather*}
G^{m,n}_{p,q} \left(\frac{1}{z}\,\vline\,\begin{matrix}\a\\ \b\end{matrix} \right)
=G^{n,m}_{q,p} \left(z\,\vline\,\begin{matrix}1-\b\\ 1-\a\end{matrix} \right).
\end{gather*}
It is important to note that by Theorem~\ref{th:sumres}(b)
\begin{gather*}
G^{p,0}_{p,p} \left( z\, \vline\, \begin{matrix}\b\\ \a\end{matrix} \right)=0 \qquad \text{for} \quad |z|>1,
\end{gather*}
which is, of course, dif\/ferent from the analytic continuation of the right-hand side of~(\ref{eq:Gp0pp}). The Mellin transform of $G^{p,0}_{p,p}$ exists if either $\Re(\psi)>0$ or $\psi=-m\in\N_0$. In the former case \cite[Theorem~2.2]{KilSaig}
\begin{gather}\label{eq:GMellin}
\int_{0}^{\infty}x^{s-1}G^{p,0}_{p,p} \left( x\,\vline\,\begin{matrix}\b\\ \a\end{matrix} \right)dx
=\int_{0}^{1}x^{s-1}G^{p,0}_{p,p} \left( x\,\vline\,\begin{matrix}\b\\ \a\end{matrix} \right)dx
=\frac{\Gamma(\a+s)}{\Gamma(\b+s)}
\end{gather}
for $\Re(s)>-\Re(\a)$. If $\psi=-m\in\N_0$ then \cite[(2.28)]{Norlund}
\begin{gather*}
\int_{0}^{\infty}x^{s-1}G^{p,0}_{p,p} \left( x\,\vline\,\begin{matrix}\b\\ \a\end{matrix} \right)dx
=\int_{0}^{1}x^{s-1}G^{p,0}_{p,p} \left( x\,\vline\,\begin{matrix}\b\\\a\end{matrix} \right)dx
=\frac{\Gamma(\a+s)}{\Gamma(\b+s)}-q(s)
\end{gather*}
for $\Re(s)>-\Re(\a)$. Here $q(s)$ is a polynomial of degree $m$ given
by
\begin{gather*}
q(s)=\sum\limits_{j=0}^{m}g^k_{p}(m-j)(s+a_k-j)_j,\qquad k=1,2,\ldots,p,
\end{gather*}
where the coef\/f\/icients $g^k_{p}(n)$ are def\/ined in expansion (\ref{eq:Norl-xi}) and are given explicitly by~(\ref{eq:Norlund-explicit}). Note that $g^k_{p}(n)$ depends on $k$ while the polynomial~$q(s)$ is the same for each $k\in\{1,\ldots,p\}$.

\subsection*{Acknowledgements}
This work has been supported by the Russian Science Foundation under project 14-11-00022.
We are also indebted to anonymous referees for a number of useful remarks that helped to improve the exposition.

\pdfbookmark[1]{References}{ref}
\LastPageEnding

\end{document}